\documentclass[12pt]{amsart}
\textwidth=5in
\textheight=7.5in
\usepackage[centering]{geometry}

\usepackage{amsmath,amsthm,amssymb}
\usepackage{times}
\usepackage{enumerate}
\usepackage{graphicx, color}

\oddsidemargin 3ex
\evensidemargin 3ex
\textheight 7.9in
\textwidth 6.0in

\theoremstyle{plain}
\newtheorem*{theorem*}{Theorem}
\newtheorem{theorem}{Theorem}[section]

\newtheorem{lemma}[theorem]{Lemma}
\newtheorem{proposition}[theorem]{Proposition}

\newcommand{\be}{\begin{equation}}
\newcommand{\ee}{\end{equation}}

\newcommand{\lt}{\left}
\newcommand{\rt}{\right}
\newcommand{\goto}{\rightarrow}
\newcommand{\R}{\mathbb{R}}
\newcommand{\e}{\epsilon}
\newcommand{\kp}{\kappa}
\newcommand{\vp}{\varphi}
\newcommand{\tu}{\tilde{u}}
\newcommand{\tC}{\tilde{C}}



\theoremstyle{definition}
\newtheorem{defin}[theorem]{Definition}
\newtheorem{remark}[theorem]{Remark}



\numberwithin{equation}{section}
\numberwithin{equation}{section}


\begin{document}
\setlength{\baselineskip}{1.2\baselineskip}

\title[General curvature flow without singularities]
{General curvature flow without singularities}

\author{Ling Xiao}
\address{Department of Mathematics, Rutgers University,
Piscataway, NJ 08854}
\email{lx70@math.rutgers.edu}

\begin{abstract}

In \cite{SS}, S{\'a}ez and Schn{\"u}rer studied the graphical mean curvature flow of complete hypersurfaces defined on subsets of Euclidean space.
They obtained long time existence. Moreover, they provided a new interpretation of weak mean curvature flow. In this paper, we generalize their results
to a general curvature setting. Our key ingredient is the existence result of general curvature flow with boundary conditions, which is proved in Section \ref{ea}.

\end{abstract}

\maketitle

\section{Introduction}
\label{int}
\setcounter{equation}{0}

In \cite{SS}, S{\'a}ez and Schn{\"u}rer studied the graphical mean curvature flow of complete hypersurfaces defined on subsets of Euclidean space.
They obtained long time existence. Moreover, they provided a new interpretation of weak mean curvature flow (see Section 1 in \cite{SS}).
In this paper, we generalize their results to a general curvature setting. More specifically, let $\Sigma_0$ be a complete Weingarten
hypersurface in $\R^{n+1}$ satisfies $f(\kappa[\Sigma_0])>0,$ and is given by an embedding $X(0):M^n\goto\R^{n+1},$ we consider the evolution of such an embedding to produce a family of embeddings $X(t)$ satisfies
\be\label{in.1}
\left\{\begin{aligned}
\dot{X}&=f(\kp[\Sigma])\nu\\
X(0)&=\Sigma_0,
\end{aligned}\right.
\ee
where $\kp[\Sigma(t)]=(\kp_1, \cdots, \kp_n)$ denotes the principal curvature of $\Sigma(t),$ and $\nu$ is the unit normal vector.

We assume the function $f$ satisfies the following fundamental structure conditions:
\be\label{in.2}
f_i(\lambda)\equiv\frac{\partial f(\lambda)}{\partial\lambda_i}>0,\,\,\mbox{in $\Gamma$}, 1\leq i\leq n,
\ee
\be\label{in.3}
\mbox{$f$ is a concave function in $\Gamma$,}
\ee
and
\be\label{in.4}
f>0\,\,\mbox{in $K$,}\,\, f=0\,\,\mbox{on $\partial\Gamma,$}
\ee
where $\Gamma\subset\R^n$ is an open symmetric convex cone such that
\be\label{in.5}
\Gamma^+_n:=\{\lambda\in\R^n:\,\,\mbox{each component $\lambda_i>0$}\}\subset\Gamma.
\ee
Moreover, we shall assume that $f$ is normalized
\be\label{in.6}
f(1, \cdots, 1)=1
\ee
and satisfies the more technical assumptions
\be\label{in.7}
\mbox{$f$ is homogeneous of degree one}.
\ee
In addition, for every $C>0$ and every compact set $\Gamma_0$ in $\Gamma$ there is a number
$R=R(C, \Gamma_0)$ such that
\be\label{in.8}
f(\kp_1, \cdots, \kp_n+R)\geq C\,\,\mbox{for all $\kp\in \Gamma_0.$}
\ee

An example of a function satisfying all of these assumptions above is given by
$f=(H_k/H_l)^{\frac{1}{k-l}},$ $0\leq l<k,$ where $H_l$ is the normalized $l$-th elementary symmetric polynomial.
(e.g, $H_0=1, H_1=H, H_n=K$ the extrinsic Gauss curvature.) In \cite{CD}, complete non-compact strictly convex
$Q_k$ flow was studied by Choi and Daskalopoulos, here we don't need the convexity assumption only \textit{admissible}
(see Definition \ref{predf.1}).

Since $f$ is symmetric, from \eqref{in.3}, \eqref{in.6}, and \eqref{in.7} we have
\be\label{in.9}
f(\lambda)\leq f(1)+\sum f_i(1)(\lambda_i-1)=\frac{1}{n}\sum\lambda_i\,\,\mbox{in $\Gamma$}
\ee
and
\be\label{in.10}
\sum f_i(\lambda)=f(\lambda)+\sum f_i(\lambda)(1-\lambda_i)\geq f(1)=1\,\,\mbox{in $\Gamma$}.
\ee

Following \cite{SS}, we shall consider general curvature flow for graphs defined on a relatively open set
\be\label{in.11}
\Omega\equiv\bigcup\limits_{t\geq 0}\Omega_t\times\{t\}\subset\R^{n+1}\times[0, \infty).
\ee
Our main results are the following:
\begin{theorem}
\label{inth.1}
Let $A\subset\R^{n+1}$ be a bounded open set and $u_0:A\goto\R$ a locally Lipschitz continuous function
with $u_0(x)\goto\infty$ for $x\goto x_0\in\partial A,$ moreover, $u_0$ is weakly admissible. Then there exists
$(\Omega, u),$ where $\Omega\subset\R^{n+1}\times[0, \infty)$ is relatively open, such that u solves graphical
general curvature flow equation
\be\label{in.12}
\dot{u}=\sqrt{1+|Du|^2}F\lt(\frac{\gamma^{ik}u_{kl}\gamma^{lj}}{W}\rt)\,\,\mbox{in $\Omega\setminus(\Omega_0\times\{0\})$}
\ee
where $F(S)=f(\kappa[S])$ and $S$ is a symmetric $n\times n$ matrix. Furthermore, the function
$u$ is smooth for $t>0$ and continuous up to $t=0,$ $\Omega_0=A,$ $u(\cdot, 0)=u_0$ in $A$ and $u(x, t)\goto\infty$
as $(x, t)\goto\partial\Omega,$ where $\partial\Omega$ is the relative boundary of $\Omega$ in $\R^{n+1}\times[0, \infty).$
\end{theorem}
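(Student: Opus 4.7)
The plan is to construct $(\Omega, u)$ by a monotone approximation procedure: truncate $u_0$ at heights $L\to\infty$, solve a Dirichlet problem at each level using the boundary-value existence theorem of Section \ref{ea}, and pass to the limit using interior estimates that do not degenerate as $L\to\infty$.

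First, for each $L>0$ I set $A_L:=\{x\in A: u_0(x)<L\}$; exploiting weak admissibility and local Lipschitz continuity of $u_0$, I mollify $u_0|_{A_L}$ (together with a slight regularization of $\partial A_L$) to obtain a smooth strictly admissible initial datum $u_0^L$ that agrees with $L$ on $\partial A_L$. The result of Section \ref{ea} then produces a smooth admissible solution $u^L$ to \eqref{in.12} on $A_L\times[0,\infty)$ with $u^L(\cdot,0)=u_0^L$ and $u^L\equiv L$ on $\partial A_L$; long-time existence for fixed $L$ is ensured by stationary cylindrical barriers together with \eqref{in.7} and \eqref{in.8}. Parabolic comparison (after suitable extension of the smaller solution by the constant $L_1$ outside $A_{L_1}$) gives $u^{L_1}\leq u^{L_2}$ whenever $L_1\leq L_2$, so the pointwise monotone limit
\[
u(x,t):=\lim_{L\to\infty}u^L(x,t)\in(-\infty,\infty]
\]
exists; I then define $\Omega_t:=\{u(\cdot,t)<\infty\}$, which is open, and $\Omega$ as in \eqref{in.11}.

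To verify that $u$ is a smooth classical solution of \eqref{in.12} on $\Omega\setminus(\Omega_0\times\{0\})$, I note that on any compact $K\subset\Omega$ the uniform $C^0$ bound $u^L\leq M_K<\infty$ comes for free from $u\leq M_K$ on $K$ and monotonicity; the interior $C^1$, admissibility, and curvature estimates for the Dirichlet problem from Section \ref{ea} then apply (depending only on $M_K$ and the distance to $\partial\Omega$, not on $L$), and, together with Krylov-Evans-Safonov parabolic regularity, yield $C^\infty_{\mathrm{loc}}$ compactness of $\{u^L\}$. Continuity at $t=0$ follows by sandwiching $u^L$ between upper and lower paraboloid barriers built from the local Lipschitz structure of $u_0$, which give $|u^L(x,t)-u_0(x)|\leq C(x)\,t$ uniformly in $L$ on compacta of $A$. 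Finally, the blow-up $u(x,t)\goto\infty$ as $(x,t)\goto\partial\Omega$ is immediate from the definition of $\Omega$ and monotone convergence.

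The main obstacle will be producing the $L$-independent interior $C^2$ estimates. In the mean-curvature case of \cite{SS}, Ecker-Huisken type interior estimates suffice; here one must instead adapt Caffarelli-Nirenberg-Spruck/Guan-type curvature estimates to the parabolic general-curvature setting, making essential use of concavity \eqref{in.3} and the coercivity \eqref{in.8} to control the largest principal curvature in terms of the $C^0$ and $C^1$ norms alone. This is precisely where the generalization from mean curvature flow to general curvature flow becomes nontrivial, and it is presumably where the bulk of Section \ref{ea} is devoted.
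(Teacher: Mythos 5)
Your overall plan — truncate, solve a Dirichlet problem, pass to the limit using interior estimates uniform in $L$ — is the right one, but there is a genuine gap in where you pose the approximating Dirichlet problem. You solve on the sublevel sets $A_L=\{u_0<L\}$, which are in general \emph{not convex}. The paper instead poses its approximating problem (equation \eqref{ea.1}) on a family of balls $B_R(0)$ with \emph{constant} boundary value $L$ and truncated initial datum $\min_\e\{u_{0,i},L\}$, and the reduction to equation \eqref{ea.2} is explicitly to a \emph{convex} domain. Convexity of the domain is used in an essential way in Section \ref{ea}: the barrier construction in Lemma \ref{ealm.1} (`Since $\Omega$ is convex\ldots'), the moving-sphere reflection argument of Proposition \ref{eapr.1}, and the boundary $C^2$ estimates of Lemma \ref{ealm.4} (`Since $\partial\Omega$ is convex\ldots'). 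None of these steps is available on a generic non-convex $A_L$, so the existence result you want to invoke from Section \ref{ea} simply does not apply in your setting.

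A few further differences worth noting. The paper uses a double approximation — smooth approximants $u_{0,i}$ of $u_0$, then a smoothed truncation $\min_\e\{u_{0,i},L\}$ which is only \emph{weakly} admissible — rather than a single mollification of $u_0|_{A_L}$; producing a strictly admissible smooth datum that exactly equals $L$ on $\partial A_L$ runs into a compatibility problem near the boundary (a flat cap has $F=0$, so strict admissibility up to $\partial A_L$ is not achievable by mollification). The interior estimates you need for the passage to the limit are those of Section \ref{ie} (Theorems \ref{ieth.1}--\ref{ieth.3}, built around the cutoff $\varphi=(M-u)_+$), not Section \ref{ea}; it is the cutoff in $u$ that makes them independent of $L$ and of the domain. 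Finally, the paper does not rely on your monotonicity $u^{L_1}\le u^{L_2}$; it first sends $i\to\infty$ with $R=i$ to obtain $u^L$, derives a lower barrier $u^L\ge L-2$ outside a compact set to secure maximality, and then extracts a convergent subsequence using the interior and H\"older estimates together with Lemmas 7.1 and 7.3 of \cite{SS}. Your monotonicity observation could simplify the extraction of a limit, but it does not repair the convexity gap.
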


A direct consequence of the existence of this smooth solution is the existence of the weak flow.
\begin{theorem}
\label{inth.2}
Let $(A, u_0)$ and $(\Omega, u)$ be as in Theorem \ref{inth.1}. Assume that the level set evolution of $\partial\Omega_0$
doesn't fatten, then it coincides with $(\partial^\mu\Omega_t)_{t\geq 0},$
where $\partial^\mu\Omega_t:=\{x\in\R^{n+2}: \forall r>0, 0<|\Omega_t\cap B_r(x)|<|B_r(x)|\}.$
\end{theorem}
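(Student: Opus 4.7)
The plan is to use the smooth solution $(\Omega, u)$ from Theorem \ref{inth.1} as a foliation of each slice $\Omega_t$ by smooth classical $F$-curvature flows, compare these with the level-set evolution using an avoidance principle, and then collapse the resulting two-sided inclusions with the no-fattening hypothesis.

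First I would observe that for almost every $c \in \R$ and each $t > 0$, the super-level set $\{u(\cdot,t) > c\}$ has smooth boundary $\Sigma_c(t) := \{u(\cdot, t) = c\}$, which evolves as a classical $F$-curvature flow inside $\Omega_t$. This is essentially built into the graphical equation \eqref{in.12}, since the horizontal normal speed of a level set of $u$ is exactly $f(\kp[\Sigma_c(t)])$. Because $u(\cdot, t) \to \infty$ at $\partial \Omega_t$, the surfaces $\Sigma_c(t)$ exhaust $\Omega_t$ from inside as $c \to \infty$. A dual construction, obtained for instance by replacing $u_0$ with $u_0 + k$ for large $k$ and passing to the limit (or by applying Theorem \ref{inth.1} to suitable complementary data), produces a family of classical smooth $F$-flows that approximate $\partial A$ from the outside.

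Next I would compare $\Sigma_c(t)$ with the level-set evolution $W_t$ of $\partial\Omega_0 = \partial A$ via the avoidance principle. Since $\Sigma_c(0) \subset A$ is disjoint from $\partial A$, the viscosity comparison principle for the level-set equation associated with $F$ yields $\Sigma_c(t) \cap W_t = \emptyset$ for all $t \geq 0$. Letting $c \to \infty$ gives $W_t \cap \Omega_t = \emptyset$, so $W_t \subset \R^{n+1}\setminus \Omega_t$. The dual argument from outside produces $W_t \subset \overline{\Omega_t}$, so together $W_t \subset \partial \Omega_t$. The no-fattening hypothesis then forces $W_t$ to have empty interior in $\R^{n+1}$, and a standard density-point argument identifies $W_t$ with the set of points where every ball meets both $\Omega_t$ and its complement in positive Lebesgue measure, namely $\partial^\mu \Omega_t$.

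The main obstacle will be the avoidance principle itself: we need a comparison result between a classical smooth $F$-flow and the viscosity level-set evolution driven by $F$, restricted to the admissible cone defined by \eqref{in.2}--\eqref{in.8}. For mean curvature flow this is classical (Chen-Giga-Goto, Evans-Spruck), but for general $f$ one must extend $F$ outside the admissible cone while preserving degenerate ellipticity and then verify that the resulting fully nonlinear level-set equation admits a viscosity comparison principle. Once this is established, the remaining topological-to-measure-theoretic translation follows the lines of the argument in \cite{SS} verbatim.
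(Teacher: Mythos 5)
The core step of your argument is wrong: the level sets $\Sigma_c(t) = \{u(\cdot,t)=c\}$ do \emph{not} evolve by classical $F$-curvature flow in $\R^{n+1}$. From the level-set identity, the normal speed of $\Sigma_c(t)$ in $\R^{n+1}$ equals $-u_t/|Du|$, and equation \eqref{in.12} gives $u_t = \sqrt{1+|Du|^2}\,F(\kappa[\mathrm{graph}\,u])$, so the normal speed is $-\frac{\sqrt{1+|Du|^2}}{|Du|}\,f(\kappa[\mathrm{graph}\,u])$. Here $\kappa[\mathrm{graph}\,u]$ are the $(n+1)$ principal curvatures of the $(n+1)$-dimensional graph in $\R^{n+2}$, not the $n$ principal curvatures of the slice $\Sigma_c(t)\subset\R^{n+1}$, and the factor $\sqrt{1+|Du|^2}/|Du|$ is not $1$. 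These quantities are related, but they coincide only asymptotically where $|Du|\to\infty$ and the graph becomes locally cylindrical. So the avoidance argument you set up between $\Sigma_c(t)$ and the level-set evolution $W_t$ has no basis; the same objection already applies in the mean-curvature case $f=H$, showing that this is not a correct reading of the Sáez--Schnürer mechanism.

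The actual mechanism, which the paper defers to Section 9 of \cite{SS}, is to run the avoidance principle one dimension up: the graph $\{(x,u(x,t))\}\subset\R^{n+2}$ is itself a smooth classical solution of the $(n+1)$-dimensional $F$-flow, and one compares it with the cylinders $\mathcal{C}_t\times\R\subset\R^{n+2}$ over hypersurfaces $\mathcal{C}_t\subset\R^{n+1}$ evolving by the level-set flow of $\partial\Omega_0$. The key structural input is that a cylinder over a solution of the $n$-dimensional flow is a solution of the $(n+1)$-dimensional flow; for $H$ this is trivial, and for the functions $f$ considered here one must check compatibility of $f$ across dimensions, roughly that $f(\kappa_1,\ldots,\kappa_n,0)$ agrees (up to the normalization \eqref{in.6}) with $f(\kappa_1,\ldots,\kappa_n)$ and that $(\kappa,0)$ remains in the admissible cone $\Gamma$. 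Avoidance in $\R^{n+2}$ then shows that the graph and the cylinders stay disjoint, which translates into the sandwiching $W_t\subset\partial\Omega_t$ you are after, and the remainder of your write-up (the no-fattening step and the passage to measure-theoretic boundaries) is then correct and matches \cite{SS}. Your identification of the missing comparison principle for viscosity solutions of the general level-set equation is a real issue, but it must be resolved for the $(n+1)$-dimensional cylinder comparison, not for the level sets of $u$.
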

\begin{remark}
\label{inrmk.1}
One can show that if $\partial\Omega_0$ is a embedded Lipschitz hypersurface, then in a short time, the general curvature flow is not fattening.
However, in general we don't know whether the solutions $(\Omega, u)$ are level set solutions.
\end{remark}

For being self-contained, we will include some expository illustrations here, one can find illustrations with more details in \cite{SS}.
Consider a function $u_0:\Omega_0\goto\R,$ for example $u_0(x):=\frac{1}{\text{dist}(x, \partial\Omega_0)}+|x|^2,$ and $u_0$ is \textit{admissible}.
Then by the existence theorem, we obtain the weak evolution of $\partial\Omega_0$, which is defined by the boundary of the domain of $u(x, t).$

\[\def\svgwidth{3 in}
\begingroup%
  \makeatletter%
  \providecommand\color[2][]{%
    \errmessage{(Inkscape) Color is used for the text in Inkscape, but the package 'color.sty' is not loaded}%
    \renewcommand\color[2][]{}%
  }%
  \providecommand\transparent[1]{%
    \errmessage{(Inkscape) Transparency is used (non-zero) for the text in Inkscape, but the package 'transparent.sty' is not loaded}%
    \renewcommand\transparent[1]{}%
  }%
  \providecommand\rotatebox[2]{#2}%
  \ifx\svgwidth\undefined%
    \setlength{\unitlength}{1271.63044384bp}%
    \ifx\svgscale\undefined%
      \relax%
    \else%
      \setlength{\unitlength}{\unitlength * \real{\svgscale}}%
    \fi%
  \else%
    \setlength{\unitlength}{\svgwidth}%
  \fi%
  \global\let\svgwidth\undefined%
  \global\let\svgscale\undefined%
  \makeatother%
  \begin{picture}(1,0.69268432)%
    \put(0,0){\includegraphics[width=\unitlength]{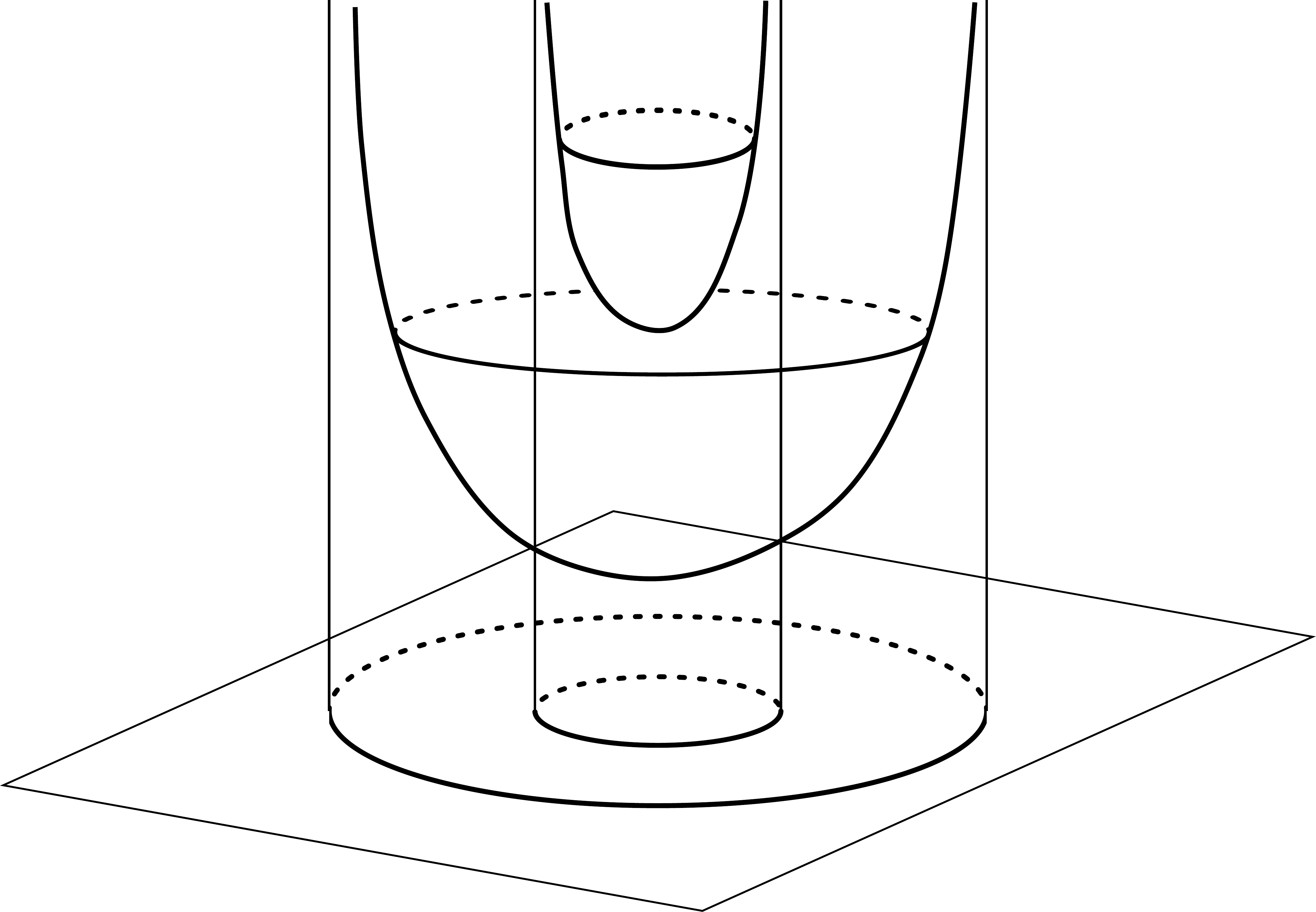}}%
    \put(0.76863723,0.17886506){\color[rgb]{0,0,0}\makebox(0,0)[lt]{\begin{minipage}{0.31691025\unitlength}\raggedright \(\partial\Omega_0\)\end{minipage}}}%
    \put(0.60593413,0.17887297){\color[rgb]{0,0,0}\makebox(0,0)[lt]{\begin{minipage}{0.35164013\unitlength}\raggedright \(\partial\Omega_t\)\end{minipage}}}%
  \end{picture}%
\endgroup%
\]
\[\mbox{Figure 1. Graph over a ball}\]

In Figure 1, we study the evolution of a graph over $B_1(0),$ and $\partial B_1(0)=\partial\Omega_0.$ Since our general curvature $f$ satisfies \eqref{in.7},
the flow behaves just like the mean curvature flow. One can see that our initial surface $u_0$ is asymptotic to the cylinder $\mathbb{S}^n\times\R.$
Moreover, as we proved in Section \ref{ex}, $u(x, t)$ continues to be asymptotic to the evolving cylinder, which contracts in finite time. However,
our graph $u(x, t)$ does not become singular, but disappears to infinity at the time the cylinder contracts. Note that near the singular time the lowest point covers arbitrarily large distance in arbitrarily small time intervals.

Figure 2 illustrates a graph over a set that develops a "neck-pinch" at $t=T.$ As $t\nearrow T,$ the graph splits above the "neck-pinch"
into two disconnected components without becoming singular. The rest of the evolution is similar to the situation above.
\[\def\svgwidth{5 in}
\begingroup%
  \makeatletter%
  \providecommand\color[2][]{%
    \errmessage{(Inkscape) Color is used for the text in Inkscape, but the package 'color.sty' is not loaded}%
    \renewcommand\color[2][]{}%
  }%
  \providecommand\transparent[1]{%
    \errmessage{(Inkscape) Transparency is used (non-zero) for the text in Inkscape, but the package 'transparent.sty' is not loaded}%
    \renewcommand\transparent[1]{}%
  }%
  \providecommand\rotatebox[2]{#2}%
  \ifx\svgwidth\undefined%
    \setlength{\unitlength}{1394.71655452bp}%
    \ifx\svgscale\undefined%
      \relax%
    \else%
      \setlength{\unitlength}{\unitlength * \real{\svgscale}}%
    \fi%
  \else%
    \setlength{\unitlength}{\svgwidth}%
  \fi%
  \global\let\svgwidth\undefined%
  \global\let\svgscale\undefined%
  \makeatother%
  \begin{picture}(1,0.55045674)%
    \put(0,0){\includegraphics[width=\unitlength]{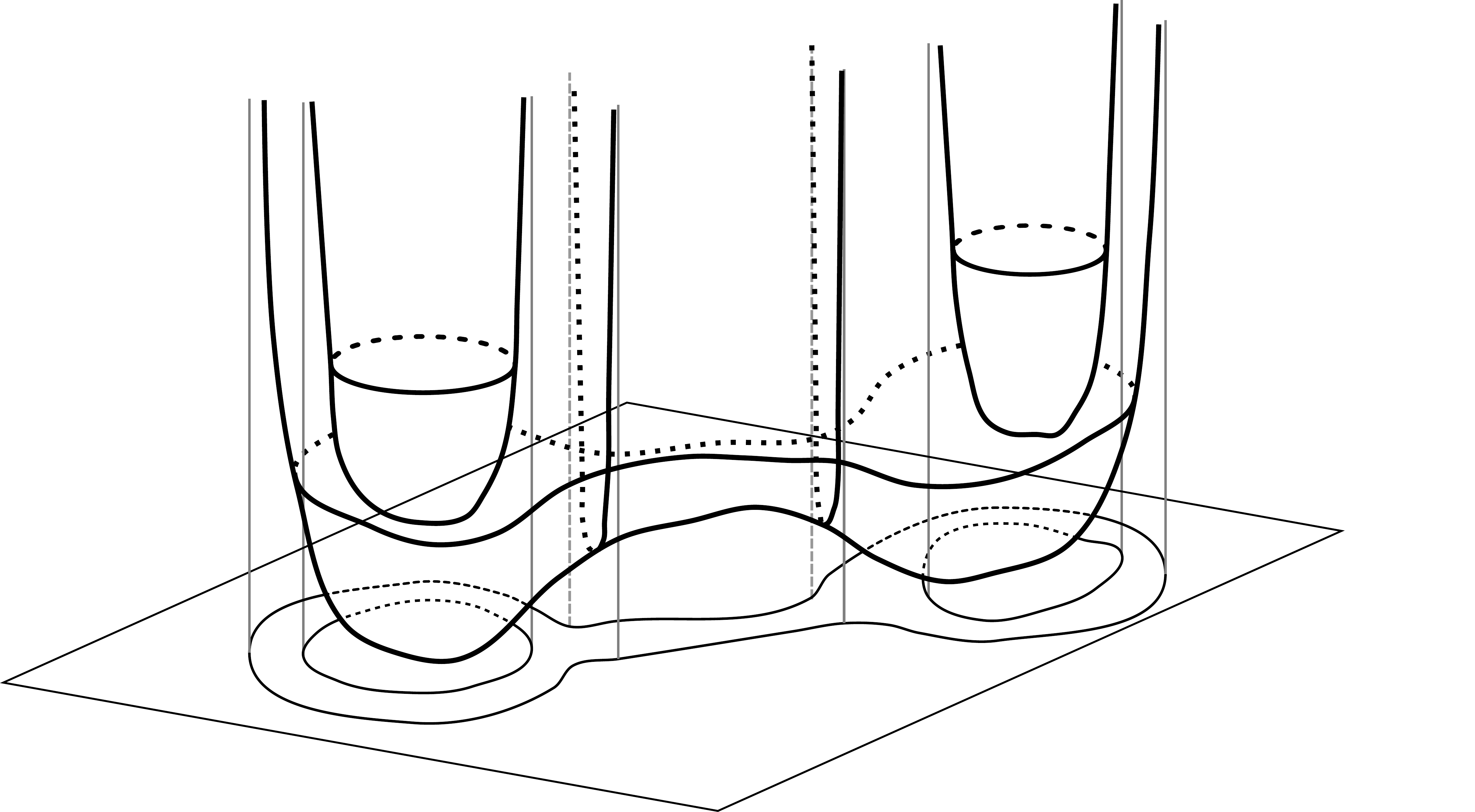}}%
    \put(0.79668871,0.18653041){\color[rgb]{0,0,0}\makebox(0,0)[lt]{\begin{minipage}{0.25693414\unitlength}\raggedright \(\partial\Omega_0\)\end{minipage}}}%
    \put(0.57442154,0.15922077){\color[rgb]{0,0,0}\makebox(0,0)[lt]{\begin{minipage}{0.20808734\unitlength}\raggedright \(\partial\Omega_t\)\end{minipage}}}%
    \put(0.22760009,0.1086514){\color[rgb]{0,0,0}\makebox(0,0)[lt]{\begin{minipage}{0.20808734\unitlength}\raggedright \(\partial\Omega_t\)\end{minipage}}}%
  \end{picture}%
\endgroup%
\]
\[\mbox{Figure 2. Graph over a set that develops a "neck-pinch"}\]

This paper is organized as following. In Section \ref{pre}, we give some basic definitions and Lemma.
Section \ref{ie} gives interior estimates we need for proving the existence result in Section \ref{ex}.
The key ingredient for proving the existence result is the existence of general curvature flow with boundary conditions, which is proved in
Section \ref{ea}.

\bigskip

\section{Preliminary}
\label{pre}
\setcounter{equation}{0}
For reader's convenience, we will give some definitions first. One can also find most of those definitions
in \cite{SS}.
\begin{defin}
\label{predf.1}
(i)\textit{\textbf{Domain of definition}}: Let $\Omega\subset\R^{n+1}\times[0, \infty)$
be a (relatively) open set. Set
\[\Omega_t:\pi_{\R^{n+1}}(\Omega\cap(\R^{n+1}\times\{t\})),\]
where $\pi_{\R^{n+1}}:\R^{n+2}\goto\R^{n+1}$ is the orthogonal projection onto the first
$n+1$ components. Note that the first $n+1$ components on the domain $\Omega$
are spatial, while the last component can be understood as the time component $t.$\\
(ii)\textit{\textbf{The solution}}: A function $u:\Omega\goto\R$ is called a classical solution to graphical
general curvature flow in $\Omega$ with locally Lipschitz continuous initial value
$u_0:\Omega_0\goto\R,$ if $u\in C^{2,1}_{loc}(\Omega\setminus(\Omega_0\times\{0\}))\cap C^0_{loc}(\Omega)$
satisfies
\be\label{pre.1}
\lt\{\begin{aligned}
\dot{u}&=\sqrt{1+|Du|^2}F\lt(\frac{\gamma^{ik}u_{kl}\gamma^{lj}}{\sqrt{1+|Du|^2}}\rt)\,\,
\mbox{in $\Omega\setminus(\Omega_0\times\{0\})$}\\
u(\cdot, 0)&=u_0\,\,\mbox{in $\Omega_0$},
\end{aligned}\right.
\ee
and $F\lt(\frac{\gamma^{ik}u_{kl}\gamma^{lj}}{\sqrt{1+|Du|^2}}\rt)>0$ in $\Omega\setminus(\Omega_0\times\{0\}),$
where $u_0$ is weakly admissible.\\
(iii)\textit{\textbf{Maximality condition}}: A function $u: \Omega\goto\R$ fulfills the maximality condition
if $u\geq c$ for some $c\in \R$ and if $u|_{\Omega\cap(\R^{n+1}\times[0, T])}$ is proper for every $T>0.$

An initial value $u_0:\Omega\goto\R,\,\,\Omega_0\subset\R^{n+1},$ is said to fulfill the maximality condition if
$\omega:\Omega_0\times[0,\infty)\goto\R$ defined by $\omega(x, t):=u_0(x)$ fulfills the maximality condition.\\
(iv)\textit{\textbf{Admissible condition}}: A function $u: \Omega\goto \R$ is admissible, if for every $t>0,$
$f(\kp[\Sigma_t])>0,$ where $\Sigma_t=\{(x, u(x, t))|x\in\Omega_t\}.$

An initial value $u_0:\Omega\goto\R,\,\,\Omega_0\subset\R^{n+1},$ is said to be admissible if
$\omega:\Omega_0\times[0,\infty)\goto\R$ defined by $\omega(x, t):=u_0(x)$ is admissible. Moreover,
an initial value $u_0:\Omega_0\goto\R,\,\,\Omega_0\subset\R^{n+1},$ is said to be \textit{\textbf{weakly admissible}},
if it can be approached by a sequence of admissible functions, i.e. there exists a sequence $\{u_i\}_{i=1}^\infty$
uniformly converges to $u_0$ on every compact set $K\subset\Omega_0,$ where $u_i$ is admissible and $|D^m u_i|<C=C(K, m),$
$m\in\mathbb{N}.$\\
(iv)\textit{\textbf{Singularity resolving solution}}: A function $u:\Omega\goto\R$
is called a singularity resolving solution to the general curvature flow in dimension $n$
with initial value $u_0:\Omega_0\goto\R$ if\\
(a) $\Omega$ and $\Omega_0$ are as in (i);\\
(b) $u$ is a classical solution to graphical general curvature flow with initial value $u_0$ as in (ii)
and $u$ fulfills the maximality condition.
\end{defin}

\begin{remark}
\label{prermk.1}
The maximality condition implies that $u$ tends to infinity if we approach a point in the relative boundary $\partial\Omega.$
It also ensures that $u(x, t)$ tends to infinity as $|x|$ tends to infinity.
\end{remark}

Next, we will list some evolution equations that will be used later. Since the calculations are straightforward, we will only
state our results here.
\begin{lemma}
\label{prelm.1}
Let $X$ be a solution to the general curvature flow \eqref{in.1}. Then we have the following evolution equations:\\
(i)$\lt(\frac{d}{dt}-F^{ij}\nabla_{ij}\rt)u=0$
where $u=\lt<X, e^{n+1}\rt>,$\\
(ii)$\frac{d}{dt}g_{ij}=-2Fh_{ij},$\\
(iii)$\frac{d}{dt}g^{ij}=2Fh^{ij},$\\
(iv)$\frac{d}{dt}\nu=-g^{ij}F_i\tau_j,$\\
(v)$\lt(\frac{d}{dt}-F^{ij}\nabla_{ij}\rt)\nu^{n+1}=F^{ij}h^k_ih_{kj}\nu^{n+1}$
where $\nu^{n+1}=\lt<\nu, e^{n+1}\rt>,$\\
(vi)$\frac{d}{dt}h^j_i=F^j_i+Fh^k_ih^j_k$ where $h^j_i=g^{jk}h_{ki},$\\
(vii)$\frac{d}{dt}F=F^{ij}F^j_i+FF^{ij}h^k_ih^j_k,$\\
(viii)$\lt(\frac{d}{dt}-F^{ij}\nabla_{ij}\rt)w=-wF^{ij}h^k_ih_{kj}-\frac{2}{w}F^{ij}w_iw_j$
where $w=(\nu^{n+1})^{-1}.$
\end{lemma}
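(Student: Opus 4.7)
The plan is to derive all eight identities by direct computation starting from the flow equation $\dot X=F\nu$ and the standard Gauss--Weingarten relations
\[\partial_i\partial_j X=\Gamma^k_{ij}\tau_k+h_{ij}\nu,\qquad \partial_i\nu=-h^k_i\tau_k,\qquad \tau_k:=\partial_k X,\]
together with the homogeneity identity $F^{ij}h_{ij}=F$ from \eqref{in.7} and the Codazzi identity $\nabla_i h_{jk}=\nabla_j h_{ik}$ in $\R^{n+1}$.

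The first four identities need only a single differentiation. For (i), I compute $\dot u=\langle F\nu,e^{n+1}\rangle=F\nu^{n+1}$ and, separately, $\nabla_{ij}u=h_{ij}\nu^{n+1}$ from the Gauss formula; then $F^{ij}\nabla_{ij}u=F^{ij}h_{ij}\nu^{n+1}=F\nu^{n+1}$ by homogeneity, and the two sides cancel. Identity (ii) follows from differentiating $g_{ij}=\langle\tau_i,\tau_j\rangle$ and using $\partial_i\dot X=F_i\nu-Fh^k_i\tau_k$; only the tangential part survives the pairing with $\tau_j$. Part (iii) is then immediate from $g^{ij}g_{jk}=\delta^i_k$. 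For (iv), the unit-length condition $|\nu|^2=1$ forces $\dot\nu$ tangential, and pairing $\langle\dot\nu,\tau_j\rangle=-\langle\nu,\partial_j\dot X\rangle=-F_j$ identifies the coefficients as $-g^{ij}F_i$.

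The remaining identities require second-order computations. For (v), I differentiate $\nu^{n+1}=\langle\nu,e^{n+1}\rangle$ in both space and time, using (iv) for the time derivative and applying Weingarten twice to get $\nabla_{ij}\nu^{n+1}=-(\nabla_j h^k_i)\tau^{n+1}_k-h^k_i h_{jk}\nu^{n+1}$; Codazzi together with the chain rule $\nabla_k F=F^{ab}\nabla_k h_{ab}$ yields $F^{ij}\nabla_j h^k_i=\nabla^k F$, which cancels the first-order piece of $\dot\nu^{n+1}$ and leaves the quadratic remainder $F^{ij}h^k_i h_{jk}\nu^{n+1}$. For (vi), the main step, I write $h_{ij}=\langle\partial_i\partial_j X,\nu\rangle$ and differentiate in $t$; the $\partial_i\partial_j\dot X$ contribution produces $F_{ij}-Fh^k_j h_{ik}$ (with the quadratic piece coming from $\partial_i\partial_j\nu$ via Weingarten), while the $\dot\nu$ contribution subtracts $\Gamma^k_{ij}F_k$, yielding $\dot h_{ij}=\nabla_i\nabla_j F-Fh^k_j h_{ik}$. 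Raising the index with (iii) then combines $+2Fh^{jk}h_{ki}$ from $\dot g^{jk}$ with $-Fh^k_ih^j_k$ from $g^{jk}\dot h_{ki}$ to produce $+Fh^k_ih^j_k$, giving the stated formula with $F^j_i=\nabla^j\nabla_i F$. Part (vii) is the chain rule $\dot F=\tfrac{\partial F}{\partial h^j_i}\dot h^j_i$ applied to (vi) term by term. Finally (viii) follows from (v) by substituting $w=(\nu^{n+1})^{-1}$; the gradient term $-\tfrac{2}{w}F^{ij}w_iw_j$ is exactly the remainder from applying the chain rule to a reciprocal.

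The main obstacle is (vi): one must carefully separate the purely tensorial time derivative of $h_{ij}$ from the Christoffel drift (which does not vanish because $\Gamma^k_{ij}$ depends on $t$), and then pay close attention to signs during the index-raising step, since the combined effect of $\dot g^{jk}$ and $\dot h_{ki}$ flips the sign of the quadratic term. Once (vi) is in hand, (vii) and (viii) are essentially bookkeeping consequences.
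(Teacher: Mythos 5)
The paper states Lemma \ref{prelm.1} without proof, remarking only that ``the calculations are straightforward,'' so there is no argument in the text to compare against. Your derivation is correct and is exactly the standard computation one would expect the author to have in mind: the flow equation $\dot X=F\nu$, Gauss--Weingarten, the Codazzi identity in flat ambient space, and the degree-one homogeneity giving $F^{ij}h_{ij}=F$. I verified each step: (i)--(iv) are single-differentiation identities and you handle them correctly; for (v) your use of Codazzi plus the chain rule $\nabla_kF=F^{ab}\nabla_kh_{ab}$ to cancel the first-order term against $\dot\nu^{n+1}=-\nabla^kF\,\tau_k^{n+1}$ is right; for (vi) the intermediate result $\dot h_{ij}=\nabla_i\nabla_jF-Fh^k_jh_{ik}$ and the sign flip coming from $\dot g^{jk}h_{ki}=2Fh^{jk}h_{ki}$ combining with $-Fh^{jk}h_{ki}$ are both correct; (vii) is indeed the chain rule applied to (vi); and (viii) follows from (v) by differentiating $w=(\nu^{n+1})^{-1}$, with the extra $-\tfrac{2}{w}F^{ij}w_iw_j$ coming from the second-derivative of the reciprocal exactly as you say. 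You correctly flag (vi) as the one place where care is needed with the time-dependence of the Christoffel symbols and with the sign during index raising. In short: correct, complete in outline, and the same approach the paper implicitly assumes.
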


\bigskip
\section{Interior estimates}
\label{ie}
\setcounter{equation}{0}

In order to obtain the interior estimates, in the following, we let
$\varphi=(M-u)_+$ be a cutoff function, where $M>0$ is a constant. Without loss of generality, in this section
we assume $u_0$ in equation \eqref{pre.1} to be smooth and admissible.

\begin{theorem}($C^1$ interior estimates)
\label{ieth.1}
Let $u$ be an admissible solution of equation \eqref{pre.1} and $u$ satisfies the maximality condition. Then
\be\label{ie.1}
\sup w\vp^2\leq\sup\limits_{\{p\in\Omega_0|u(p,0)<M\}}w(p, 0)\vp^2(p, 0)
\ee
\end{theorem}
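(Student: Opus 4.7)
The strategy is a parabolic maximum principle applied to $w\vp^2$. Write $L := \frac{d}{dt} - F^{ij}\nabla_{ij}$. Since $L\vp = -Lu = 0$ on $\{u < M\}$ by Lemma \ref{prelm.1}(i), and $Lw = -wF^{ij}h^k_ih_{kj} - \frac{2}{w}F^{ij}w_iw_j$ by Lemma \ref{prelm.1}(viii), a product-rule computation yields
\[
L(w\vp^2) = -w\vp^2 F^{ij}h^k_ih_{kj} - \frac{2\vp^2}{w}F^{ij}w_iw_j - 2wF^{ij}\vp_i\vp_j - 4\vp F^{ij}w_i\vp_j.
\]
At any interior spatial critical point of $w\vp^2$ lying in $\{\vp > 0\}$, the relation $\vp^2\nabla w = -2w\vp\nabla \vp$ transforms the second term on the right into $-8wF^{ij}\vp_i\vp_j$ and the fourth into $+8wF^{ij}\vp_i\vp_j$, so they cancel exactly. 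Combined with positive-definiteness of $(F^{ij})$ from \eqref{in.2} and the non-negativity of $F^{ij}h^k_ih_{kj} = \sum_i f_i\kp_i^2$ (in a principal frame), this gives
\[
L(w\vp^2) \leq -2wF^{ij}\vp_i\vp_j \leq 0
\]
at any such point.

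To promote this to the sharp comparison \eqref{ie.1}, I fix $T>0$ and work on $K_T := \{(x,t)\in\Omega : u(x,t)\leq M,\ 0\leq t\leq T\}$, which is compact by the maximality condition (Remark \ref{prermk.1}). Apply the same analysis to the perturbation $\phi_\e := w\vp^2 - \e t$ for $\e>0$: the spatial critical-point condition is unchanged (since $\e t$ has zero spatial gradient), the above cancellation still occurs, and the bound sharpens to $L\phi_\e \leq -\e < 0$. This contradicts the first-order maximum conditions $\partial_t\phi_\e \geq 0$ and $F^{ij}(\phi_\e)_{ij}\leq 0$, so the maximum of $\phi_\e$ on $K_T$ must be attained either at $t=0$ or on the lateral boundary $\{u=M\}$, where $\vp\equiv 0$ and $\phi_\e\leq 0$. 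Sending $\e\to 0$ and $T\to\infty$, and noting $w\vp^2 \equiv 0$ outside $K_T$, gives \eqref{ie.1}.

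The main obstacle is engineering the precise cancellation of the first-derivative cross terms: the factor $2$ coming from differentiating $\vp^2$ and the coefficient $\frac{2}{w}$ in Lemma \ref{prelm.1}(viii) must conspire exactly to annihilate those otherwise sign-indefinite contributions, so the particular power $\vp^2$ is essential---this is the structural reason behind the choice of test function. A minor secondary point is that $\vp=(M-u)_+$ is only Lipschitz across $\{u=M\}$, but because $w$ is bounded on compact subsets of the smooth region and $\vp$ vanishes linearly there, $w\vp^2$ is continuous on $K_T$ and its supremum is attained, while the classical parabolic maximum principle is invoked only on the open set $\{u<M\}$ where all quantities are smooth.
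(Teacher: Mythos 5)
Your proof is correct and follows essentially the same route as the paper: compute $L(w\vp^2)$ via Lemma \ref{prelm.1}(i),(viii), use the first-order condition $\vp^2\nabla w=-2w\vp\nabla\vp$ at a critical point to cancel the cross terms, and apply the parabolic maximum principle. The only difference is cosmetic rigor---the paper directly observes $L(w\vp^2)<0$ (strict, since admissibility forces $F^{ij}h^k_ih_{kj}=\sum f_i\kp_i^2>0$), whereas you keep the weaker $\leq 0$ and upgrade it with the $-\e t$ perturbation and an explicit compactness discussion on $K_T$; both reach the same conclusion.
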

\begin{proof}
By Lemma \ref{prelm.1} we have
\be\label{ie.2}
\begin{aligned}
L(w\vp^2)&=\lt(\frac{d}{dt}-F^{ij}\nabla_{ij}\rt)(w\vp^2)\\
&=\vp^2Lw+2w\vp L\vp-2F^{ij}w\vp_i\vp_j-2\vp F^{ij}w_i\vp_j\\
&=-w\vp^2 F^{ij}h^k_ih_{kj}-\frac{2\vp^2}{w}F^{ij}w_iw_j-2wf^{ij}\vp_i\vp_j-4\vp F^{ij}w_i\vp_j.\\
\end{aligned}
\ee
If $w\vp^2$ achieves its maximum at an interior point, then at this point we have
\be\label{ie.3}
w_i\vp^2+2\vp w\vp_i=0,
\ee
therefore,
\be\label{ie.4}
L(w\vp^2)=-w\vp^2F^{ij}h^k_ih_{kj}-2wF^{ij}\vp_i\vp_j<0.
\ee
By the maximum principle, it's impossible. Thus, we proved this theorem.
\end{proof}

\begin{theorem}(Lower bound of speed)
\label{ieth.2}
Let $\Sigma_t=\{(x, u(x, t)| x\in\Omega_t\}$ be a smooth graph of $u$ with positive $F$ curvature,  where $u$ is an admissible solution of
equation \eqref{pre.1}and $u$ satisfies the maximality condition. Then for any $t>0$ we have
\be\label{ie.5}
\vp^{-1}(p,t)F\geq\inf\limits_{\{p\in\Omega_0|u(p, 0)<M\}}\vp^{-1}(p, 0)F(p, 0).
\ee
\end{theorem}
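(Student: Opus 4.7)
The strategy mirrors the proof of Theorem \ref{ieth.1}: form the quotient $Q:=F\varphi^{-1}$ and show that $Q$ obeys a linear parabolic inequality to which a minimum principle applies. In the open region $\{u<M\}$ the cutoff reduces to the smooth function $\varphi=M-u$, so Lemma \ref{prelm.1}(i) gives $L\varphi=-Lu=0$. Lemma \ref{prelm.1}(vii) can be read as $LF=F\cdot F^{ij}h^k_i h_{kj}\geq 0$, since the term $F^{ij}F^j_i$ there is the elliptic action $F^{ij}\nabla_{ij}F$.

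Next I would compute $LQ$ by the quotient rule. A direct expansion produces
\[ LQ=\varphi^{-1}LF-F\varphi^{-2}L\varphi+2\varphi^{-2}F^{ij}F_i\varphi_j-2F\varphi^{-3}F^{ij}\varphi_i\varphi_j.\]
Substituting $\nabla_i F=\varphi\nabla_i Q+(F/\varphi)\varphi_i$ into the cross term makes the pure $\varphi_i\varphi_j$ contributions cancel, leaving
\[ LQ-2\varphi^{-1}F^{ij}\varphi_j\nabla_i Q=\varphi^{-1}F\cdot F^{ij}h^k_i h_{kj}\geq 0.\]
This is exactly the shape required for the parabolic minimum principle, with bounded (on compacta) first-order drift $b^i=2\varphi^{-1}F^{ij}\varphi_j$ and no zeroth-order term.

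Finally, I would apply the minimum principle on the parabolic domain $\{(x,t):u(x,t)<M,\ 0\leq t\leq T\}$. On the ``lateral boundary'' $\{u=M\}$ one has $\varphi\to 0^+$ while $F$ stays positive on compact subsets, so $Q\to+\infty$; the maximality condition forces $u\to\infty$ and hence $Q\to\infty$ at the relative boundary of $\Omega$ and as $|x|\to\infty$. Consequently the infimum of $Q$ is either attained at $t=0$ or would occur at an interior spacetime minimum, where $\partial_t Q\leq 0$, $\nabla Q=0$, and $F^{ij}\nabla_{ij}Q\geq 0$. The interior option is ruled out by the inequality above (after standard perturbation by $\epsilon e^{-\lambda t}$ if needed), yielding $\inf_{p\in\Omega_t, u<M}Q(p,t)\geq\inf_{p\in\Omega_0, u_0<M}Q(p,0)$, which is \eqref{ie.5}.

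The only delicate point is verifying that the infimum is genuinely attained in the open region $\{\varphi>0\}\cap\Omega$ rather than escaping to the free boundary or to infinity; this is handled by positivity of $F$ on admissible solutions together with the $C^1$ control from Theorem \ref{ieth.1}, which keeps the geometry tame on compact subsets, and by the maximality condition at $\partial\Omega$. Once this is in place the proof is a clean minimum-principle argument exactly parallel to Theorem \ref{ieth.1}, with $\varphi^{-2}$ playing the role there played by $\varphi^2$.
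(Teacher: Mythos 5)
Your proposal reproduces the paper's argument essentially verbatim: you form the quotient $Q=\varphi^{-1}F$, expand $LQ$ by the product rule, use $L\varphi=-Lu=0$ and $LF=FF^{ij}h^k_ih_{kj}$ to rewrite the result as $LQ=\varphi^{-1}FF^{ij}h^k_ih_{kj}+2\varphi^{-1}F^{ij}\varphi_j Q_i$, and then invoke the parabolic minimum principle, noting that $Q\to\infty$ on $\{\varphi=0\}$ so the infimum cannot escape to the free boundary. This is exactly the computation in \eqref{ie.6} and the concluding maximum-principle argument in the paper.
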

\begin{proof}
By Lemma \ref{prelm.1} we have
\be\label{ie.6}
\begin{aligned}
L(\vp^{-1}F)&=\vp^{-1}LF-\vp^{-2}FL\vp-2F\vp^{-3}F^{ij}\vp_i\vp_j+2\vp^{-2}F^{ij}\vp_iF_j\\
&=\vp^{-1}FF^{ij}h^k_ih^j_k+2\vp^{-1}F^{ij}\vp_j(\vp^{-1}F)_i.\\
\end{aligned}
\ee
If $\vp^{-1}F$ achieves its minimum at an interior point, then at this point we would have
\[L(\vp^{-1}F)>0,\]
which contradicts to the maximum principle. Therefore, we proved this theorem.
\end{proof}

Finally, we will follow the idea of \cite{SUW} and prove the $C^2$ interior estimates.
\begin{theorem}($C^2$ interior estimates)
\label{ieth.3}
Suppose u is an admissible solution of equation \eqref{pre.1} and $u$ satisfies the maximality condition.
Then when $u<M$ we have
\be\label{ie.7}
|D^2u|\leq\frac{C}{(M-u)},
\ee
where $C=C(u_0).$
\end{theorem}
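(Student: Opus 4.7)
The plan is to reduce \eqref{ie.7} to a pointwise bound $\varphi\kappa_{\max}\leq C(u_0)$, where $\kappa_{\max}=\max_i\kappa_i$ is the largest principal curvature of $\Sigma_t$. Combined with the $C^1$ bound on $w=\sqrt{1+|Du|^2}$ from Theorem \ref{ieth.1} and the admissibility of $u$---which controls each $|\kappa_i|$ by a dimensional multiple of $\kappa_{\max}$ through the cone structure of $\Gamma$---the identity $h^i_j=\gamma^{ik}u_{kl}\gamma^{lj}/w$ converts this curvature bound into the claimed estimate on $\varphi|D^2u|$.

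To estimate $\varphi\kappa_{\max}$ I would follow the Sheng-Urbas-Wang strategy \cite{SUW} and introduce the auxiliary function
\[
G := \log\kappa_{\max} + \alpha\log w - \beta\log\varphi
\]
for positive constants $\alpha,\beta$ to be chosen. The maximality condition forces $\varphi\to 0$ on the relative boundary of $\Omega$, so $G\to-\infty$ there, and a spacetime maximum of $G$ on $\{u<M\}$ is either attained at $t=0$---where \eqref{ie.7} follows from the initial data---or at an interior point, at which I treat $\kappa_{\max}$ as smooth by the usual perturbation device in a frame diagonalizing $(h^i_j)$. Applying $L := \partial_t - F^{ij}\nabla_{ij}$ term by term via Lemma \ref{prelm.1}: item (i) gives $L\varphi=0$, hence $L(\log\varphi)=-\varphi^{-2}F^{ij}\varphi_i\varphi_j$; item (viii) gives $L(\log w)=-F^{ij}h^k_ih_{kj}-w^{-2}F^{ij}w_iw_j$, the first piece being the crucial coercive term; and item (vi), together with the standard Simons-type commutation on $\R^{n+1}$ and the concavity \eqref{in.3}, yields $L(\log\kappa_{\max})\leq F\kappa_{\max}+F^{ij}(\log\kappa_{\max})_i(\log\kappa_{\max})_j$ up to a non-positive concavity contribution.

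At an interior maximum, the vanishing $\nabla G=0$ rewrites $\nabla\log\kappa_{\max}$ as $\beta\nabla\log\varphi-\alpha\nabla\log w$, and Cauchy-Schwarz absorbs the resulting quadratic gradient terms into multiples of $F^{ij}w_iw_j/w^2$ and $F^{ij}\varphi_i\varphi_j/\varphi^2$. Combined with $LG\geq 0$ at the maximum, the lower bound $\sum F_i\geq 1$ from \eqref{in.10}, the positive lower bound on $F$ from Theorem \ref{ieth.2}, and $|D\varphi|^2\leq w^2-1$ from Theorem \ref{ieth.1}, one deduces an inequality of the form $\kappa_{\max}^2\leq C(\kappa_{\max}+\varphi^{-2})$ at the maximum, whence $\varphi\kappa_{\max}\leq C(u_0)$. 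The main difficulty I anticipate is the precise cancellation of the gradient cross terms and, more subtly, securing a lower bound of the form $F^{ij}h^k_ih_{kj}\gtrsim\kappa_{\max}^2$ under only the structural assumptions \eqref{in.2}--\eqref{in.8}; the novelty over the classical SUW argument is that $\varphi$ evolves under the flow rather than being a fixed spatial cutoff, but since $L\varphi=0$ this contributes only the controlled gradient error $|D\varphi|^2\leq w^2-1$.
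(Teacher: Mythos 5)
Your plan starts correctly: reduce to a bound on $\varphi\kappa_{\max}$, diagonalize at a maximum of a test function, use the $C^1$ bound to get $\nu^{n+1}$ bounded away from $0$ on $\{u<M\}$, and then apply $L=\partial_t-F^{ij}\nabla_{ij}$ to the parts via Lemma \ref{prelm.1}. The paper follows this same skeleton with $\psi=\beta\log\varphi+\log\kappa_{\max}-\log(\nu^{n+1}-a)$. However, your specific test function and the claim that Cauchy--Schwarz closes the estimate both fail, and the omission you flag at the end is in fact the core of the proof.

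First, a sign error: with $G=\log\kappa_{\max}+\alpha\log w-\beta\log\varphi$, the last term tends to $+\infty$ as $\varphi\to 0$, so $G$ does \emph{not} tend to $-\infty$ at the relative boundary. You need $+\beta\log\varphi$ (as in the paper). Second, and more seriously, the $\alpha\log w$ term cannot produce both the coercive term and absorb the bad gradient. From $L\log w=-\sum f_i\kappa_i^2-F^{ii}w_i^2/w^2$ and (from the Simons commutation) $L\log\kappa_{\max}=\kappa_1^{-1}F^{ij,rs}h_{ij1}h_{rs1}+\sum f_i\kappa_i^2+F^{ii}h_{11i}^2/h_{11}^2$ (note: the reaction term is $\sum f_i\kappa_i^2$, not $F\kappa_{\max}$). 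The $\sum f_i\kappa_i^2$ terms cancel exactly when $\alpha=1$, giving no coercivity; if instead $\alpha>1$ so that $(1-\alpha)\sum f_i\kappa_i^2<0$, then the critical-point relation gives $h_{11i}/h_{11}=-\alpha w_i/w+\beta\varphi_i/\varphi$, and absorbing $F^{ii}h_{11i}^2/h_{11}^2$ into $-\alpha F^{ii}w_i^2/w^2$ via Cauchy--Schwarz requires $(1+\epsilon)\alpha^2\leq\alpha$, forcing $\alpha<1$ --- a contradiction. The paper evades this by using $-\log(\nu^{n+1}-a)$; the affine shift $a$ generates the coercive term $-\tfrac{a}{\nu^{n+1}-a}\sum f_i\kappa_i^2$ while keeping the coefficient of $F^{ii}(\nu^{n+1}_i)^2/(\nu^{n+1}-a)^2$ exactly $-1$, the same order as the piece coming from $L\log\kappa_{\max}$ after the critical-point substitution.

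Even so, the coefficients still don't close with a naive Cauchy--Schwarz: the crude bound gives an overshoot $(1+\epsilon)F^{ii}(\nu^{n+1}_i)^2/(\nu^{n+1}-a)^2$ exceeding the available $-F^{ii}(\nu^{n+1}_i)^2/(\nu^{n+1}-a)^2$. The missing ingredient is the SUW/Andrews two-case dichotomy and index split, which you do not carry out. In Case 1 ($\kappa_n<-\theta\kappa_1$) the coercive term is strong (since $f_n\geq\mathcal{T}/n$ and $\kappa_n^2\geq\theta^2\kappa_1^2$) and the concavity term $F^{ij,rs}h_{ij1}h_{rs1}\leq 0$ is simply dropped. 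In Case 2 ($\kappa_n\geq-\theta\kappa_1$), one partitions $\{1,\dots,n\}$ into $I=\{j:f_j\leq 4f_1\}$ and $J=\{j:f_j>4f_1\}$ and uses the refined concavity inequality
\[
\frac{1}{\kappa_1}F^{ij,rs}h_{ij1}h_{rs1}\leq\frac{2}{\kappa_1}\sum_{j\in J}\frac{f_1-f_j}{\kappa_1-\kappa_j}\,|\nabla_j h_{11}|^2\leq-\frac{3}{2}\sum_{j\in J}\frac{f_j}{\kappa_1^2}\,|\nabla_j h_{11}|^2
\]
to absorb precisely the $J$-index gradient overshoot. Your own closing remark (``securing a lower bound of the form $F^{ij}h^k_ih_{kj}\gtrsim\kappa_{\max}^2$ under only the structural assumptions'') names this gap --- that lower bound is \emph{false} in general and only holds after the case split, in Case 1. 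Without it, the argument as stated does not yield $\varphi\kappa_{\max}\leq C$.
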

\begin{proof}
First by Theorem \ref{ieth.1}, we know that there exists $a=a(M+1)>0$ such that
$\nu^{n+1}\geq 2a>0$ when $u\leq M.$

We consider function $\psi=\beta\log\vp+\log\kp_{\max}-\log(\nu^{n+1}-a),$ where $\beta>0$ to be determined.
Assume $\psi$ achieves its maximum at an interior point $X_0=(x_0, u(x_0, t_0)).$
We can choose a local coordinates in the neighborhood of $X_0$ such that at $X_0$ we have
$\kappa_1=\kp_{\max}(X_0),$ $g_{ij}=\delta_{ij},$ and $h_{ij}=\kp_i\delta_{ij}.$ We will also assume
$\kp_1\geq\kp_2 \geq \cdots \geq\kp_n.$ Consequently,
we also have $F^{ij}=f_i\delta_{ij}$ at this point. In the following, all calculations are done at this point,
so we will not distinguish between $h^i_j$ and $h_{ij}.$

Now, differentiating $\psi$ at $X_0$ we get
\be\label{ie.8}
\beta\frac{\vp_i}{\vp}+\frac{h_{11i}}{h_{11}}-\frac{\nu^{n+1}_i}{\nu^{n+1}-a}=0.
\ee
Moreover,
\be\label{ie.9}
\begin{aligned}
\frac{d}{dt}\psi&=\beta\frac{\dot{\vp}}{\vp}+\frac{\dot{h_{11}}}{h_{11}}-\frac{\dot{\nu^{n+1}}}{\nu^{n+1}-a}\\
&=-\frac{\beta}{\vp}F^{ii}u_{ii}+\frac{1}{h_{11}}\lt\{F^{ii}h_{ii11}+F^{ij, rs}h_{ij1}h_{rs1}+Fh_{11}^2\rt\}
+\frac{1}{\nu^{n+1}-a}F_iu_i.
\end{aligned}
\ee
In the Euclidean space, we have
\[h_{11ii}-h_{ii11}=h_{11}^2h_{ii}-h_{ii}^2h_{11}.\]
Therefore,
\be\label{ie.10}
\begin{aligned}
\lt(\frac{d}{dt}-F^{ii}\nabla_{ii}\rt)\psi&=\frac{-\beta}{\vp}F^{ii}u_{ii}
\frac{1}{h_{11}}\{F^{ii}h_{ii11}+F^{ij,rs}h_{ij1}h_{rs1}+Fh^2_{11}\}\\
&+\frac{1}{\nu^{n+1}-a}F_iu_i
-F^{ii}\left\{\frac{-\beta}{\vp}u_{ii}-\frac{\beta}{\vp^2}u_i^2+\frac{h_{11ii}}{h_{11}}
-\frac{h_{11i}^2}{h_{11}^2}\right.\\
&\left.-\frac{\nu^{n+1}_{ii}}{\nu^{n+1}-a}+\frac{(\nu^{n+1}_i)^2}{(\nu^{n+1}-a)^2}\right\}\\
&=-\frac{a}{\nu^{n+1}-a}\sum f_i\kp_i^2+\frac{1}{h_{11}}F^{ij, rs}h_{ij1}h_{rs1}+\frac{\beta}{\vp^2}\sum f_iu_i^2\\
&+\frac{1}{\kp_1^2}\sum f_ih^2_{11i}-\sum f_i\frac{(\nu^{n+1}_i)^2}{(\nu^{n+1}-a)^2}.
\end{aligned}
\ee

We will divide it into 2 cases.

Case 1. $\kp_n<\theta\kp_1$ at $X_0.$
We have
\be\label{ie.11}
\begin{aligned}
\lt|\frac{h_{11i}}{h_{11}}\rt|^2&=\lt|\frac{\nu^{n+1}_i}{\nu^{n+1}-a}-\beta\frac{\vp_i}{\vp}\rt|^2\\
&=\lt|\frac{\nu^{n+1}_i}{\nu^{n+1}-a}+\beta\frac{u_i}{\vp}\rt|^2\\
&\leq(1+\e)\lt|\frac{\nu^{n+1}_i}{\nu^{n+1}-a}\rt|^2+(1+\e^{-1})\beta^2\lt|\frac{u_i}{\vp}\rt|^2.\\
\end{aligned}
\ee
Therefore,
\be\label{ie.12}
L\psi\leq\frac{-a}{\nu^{n+1}-a}\sum f_i\kp_i^2+[\beta+(1+\e^{-1})\beta^2]\sum\frac{f_iu_i^2}{\vp^2}
+\e\sum\frac{f_iu_i^2\kp_i^2}{(\nu^{n+1}-a)^2}.
\ee
Since \[\sum f_i\kp_i^2\geq f_n\kp_n^2\geq\frac{\theta^2}{n}\kp_1^2\mathcal{T},\]
where $\mathcal{T}=\sum f_i.$
We get
\be\label{ie.13}
-\frac{c_1\theta^2}{n}\kp_1^2\mathcal{T}+\frac{c}{\vp^2}\mathcal{T}\geq 0
\ee
at $X_0,$
which implies
\be\label{ie.14}
\kp_1^2\vp^2\leq C.
\ee
Case 2. $\kp_n\geq -\theta\kp_1$ at $X_0.$
Let's partition $\{1, \cdots, n\}$ into two parts:
\[I=\{j: f_j\leq 4 f_1\}\,\,\mbox{and}\,\, J=\{j: f_j>4f_1\}.\]
For $i\in I$ we have
\be\label{ie.15}
\begin{aligned}
\frac{1}{\kp_1^2}f_i|\nabla_ih_{11}|^2&=f_i\lt|\frac{\nu^{n+1}_i}{\nu^{n+1}-a}+\beta\frac{u_i}{\vp}\rt|^2\\
&\leq(1+\e)f_i\lt|\frac{\nu^{n+1}_i}{\nu^{n+1}-a}\rt|^2+\frac{c}{\vp^2}(1+\e^{-1})\beta^2f_1.
\end{aligned}
\ee
For $j\in J$ we have
\be\label{ie.16}
\begin{aligned}
\beta f_j\frac{|\nabla_j\vp|^2}{\vp^2}&=\beta^{-1}f_j\lt(\frac{\nu^{n+1}_j}{\nu^{n+1}-a}-\frac{\nabla_jh_{11}}{h_{11}}\rt)^2\\
&\leq\frac{1+\e}{\beta}f_j\frac{|\nabla_j\nu^{n+1}|^2}{(\nu^{n+1}-a)^2}+\frac{1+\e^{-1}}{\beta}f_j\frac{|\nabla_jh_{11}|}{h_{11}^2}.\\
\end{aligned}
\ee
Combining \eqref{ie.15} and \eqref{ie.16} we get
\be\label{ie.17}
\begin{aligned}
&\beta\sum\limits_{i=1}^nf_i\frac{|\nabla_i\vp|^2}{\vp^2}+\sum\limits_{i=1}^nf_i\frac{|\nabla_ih_{11}|}{h_{11}^2}\\
&\leq[\beta+(1+\e^{-1})\beta^2]\sum\limits_{i\in I}f_i\frac{|\nabla_i\vp|^2}{\vp^2}
+(1+\e)\sum\limits_{i\in I}f_i\lt|\frac{\nabla_i\nu^{n+1}}{\nu^{n+1}-a}\rt|^2\\
&+\frac{1+\e}{\beta}\sum\limits_{j\in J}f_j\lt|\frac{\nabla_j\nu^{n+1}}{\nu^{n+1}-a}\rt|^2
+[1+(1+\e^{-1})\beta^{-1}]\sum\limits_{j\in J}f_j\frac{|\nabla_jh_{11}|^2}{h_{11}^2}\\
&\leq 4n[\beta+(1+\e^{-1})\beta^2]f_1\frac{|\nabla\vp|^2}{\vp^2}
+(1+\e)(1+\beta^{-1})\sum\limits_{i=1}^nf_i\lt|\frac{\nu^{n+1}_i}{\nu^{n+1}-a}\rt|^2\\
&+[1+(1+\e^{-1})\beta^{-1}]\sum\limits_{j\in J}f_j\frac{|\nabla_jh_{11}|}{h_{11}^2}.\\
\end{aligned}
\ee
Therefore, at $X_0$ we get
\be\label{ie.18}
\begin{aligned}
0&\leq-\frac{a}{\nu^{n+1}-a}\sum f_i\kp_i^2+\frac{1}{\kp_1}F^{ij, rs}h_{ij1}h_{rs1}
+4n[\beta+(1+\e^{-1})\beta^2]f_1\frac{|\nabla\vp|^2}{\vp^2}\\
&+[(1+\e)(1+\beta^{-1})-1]\sum f_i\lt|\frac{\nu^{n+1}_i}{\nu^{n+1}-a}\rt|^2
+[1+(1+\e^{-1})\beta^{-1}]\sum\limits_{j\in J}f_j\frac{|\nabla_jh_{11}|^2}{h_{11}^2}.\\
\end{aligned}
\ee
Since
\be\label{ie.19}
\begin{aligned}
\frac{1}{\kp_1}F^{ij, rs}h_{ij1}h_{rs1}&\leq\frac{2}{\kp_1}\sum\limits_{j\in J}\frac{f_1-f_j}{\kp_1-\kp_j}|\nabla_jh_{11}|^2\\
&\leq\frac{-3}{2\kp_1}\sum\limits_{j\in J}\frac{f_j}{\kp_1}|\nabla_jh_{11}|^2\\
&=\frac{-3}{2}\sum\limits_{j\in J}\frac{f_j}{\kp_1^2}|\nabla_jh_{11}|^2,
\end{aligned}
\ee
and
\be\label{ie.20}
f_i\lt|\frac{\nu^{n+1}_i}{\nu^{n+1}-a}\rt|^2\leq\frac{1}{a^2}\sum f_i\kp_i^2u_i^2\leq\frac{c_0}{a^2}\sum f_i\kp_i^2.
\ee
We can choose $\beta$ large and $\e>0$ small such that
\[\frac{1+\e^{-1}}{\beta}\leq\frac{1}{2}\,\,\mbox{and}\,\,[(1+\e)(1+\beta^{-1})-1]\frac{c_0}{a^2}\leq \frac{a}{2(1-a)},\]
then we have
\be\label{ie.21}
0\leq-\frac{a}{2(1-a)}f_1\kp_1^2+4n[\beta+(1+\e^{-1})\beta^2]\frac{cf_1}{\vp^2},
\ee
which implies $\vp^2\kp_1^2\leq C.$
Combining case 1 and case 2 we proved this theorem.
\end{proof}

\bigskip
\section{Existence of approximating solutions}
\label{ea}
\setcounter{equation}{0}

Following \cite{SS}, we choose a smooth monotone approximation $g$ of $\min\{\cdot, 0\}$
such that $g(x)=\min\{x, 0\}$ for $|x|>1,$ $0\leq g'\leq 1,$ and set
$\min_\e\{a, b\}:=\e g\lt(\frac{1}{\e}(a-b)\rt)+b.$
We will set $\min_\e\{u(x), L\}:=L$ at $x$ if $u$ is not defined at $x.$

We will prove the following Theorem.
\begin{theorem}
\label{eath.1}
Let $A\subset\R^{n+1}$ be an open set. Assume that $u_0: A\goto\R$ is locally
Lipschitz continuous, weakly admissible, and satisfies maximality condition.
Let $L>0,$ $R>0,$ then there exists a smooth solution
$u_{i, R}^L\in C^{\infty}(B_R\times(0, \infty))$ to
\be\label{ea.1}
\left\{\begin{aligned}
\dot{u}&=\sqrt{1+|Du|^2}F\lt(\frac{\gamma^{ik}u_{kl}\gamma^{lj}}{w}\rt)\,\,
&\mbox{in $B_R(0)\times[0, \infty)$}\\
u&=L\,\,&\mbox{on $\partial B_R(0)\times[0,\infty)$}\\
u(\cdot, 0)&=min_\e\{u_{0,i}, L\}\,\,&\mbox{in $B_R(0)$},\\
\end{aligned}\right.
\ee
where $u_{0, i}$ is a smooth, admissible function and $u_{0, i}\goto u_0$
uniformly on compact set. We always assume that $R\geq R_0(L, i)$ is so large that $u_{0, i}\geq L+1$
on $\partial B_R(0)$ and $\min_\e\{u_{0,i}, L\}$ is weakly admissible in $B_R(0).$
\end{theorem}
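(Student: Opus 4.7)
The plan is to combine short-time existence with uniform a priori estimates up to the parabolic boundary, then invoke the continuation method and standard parabolic regularity to obtain a solution on all of $B_R(0) \times [0, \infty)$.

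I would first check that $v_{0} := \min_\e\{u_{0,i}, L\}$ is smooth and admissible in $B_R(0)$ and that $v_0 \equiv L$ on a neighborhood of $\partial B_R(0)$. Since $u_{0,i} - L \geq 1$ on $\partial B_R(0)$ and $g(s) = \min\{s, 0\}$ for $|s| > 1$, one obtains $v_0 = L$ wherever $u_{0,i} - L > \e$, so the corner compatibility conditions at $t = 0$ on $\partial B_R(0)$ are automatic. Because the linearized operator is uniformly parabolic on admissible graphs by \eqref{in.2}, standard short-time existence for quasilinear parabolic Dirichlet problems then yields a smooth admissible solution on $B_R(0) \times [0, T^*)$ for some $T^* > 0$.

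The heart of the proof is the set of a priori estimates on $B_R(0) \times [0, T]$ for any finite $T$, with constants independent of $T^*$. A $C^0$ lower bound follows from $\dot u = F w > 0$, which gives $u \geq \inf v_0$; a $C^0$ upper bound comes from comparison with a radially symmetric super-solution at height $L + C(T)$ constructed using hypothesis \eqref{in.8}. A boundary $C^1$ estimate is obtained by barriers that touch $u \equiv L$ at $\partial B_R(0)$ from above and below; combined with the auxiliary-function arguments behind Theorems \ref{ieth.1} and \ref{ieth.3}, applied with the cutoff $\vp = (M - u)_+$ for $M$ exceeding the $C^0$ bound, these yield the interior $C^1$ and $C^2$ bounds after accounting for the new boundary contributions to $w\vp^2$ and $\kp_{\max}$. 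Theorem \ref{ieth.2} provides an interior positive lower bound for $F$, and the admissibility of the initial datum extends it to a neighborhood of $\partial B_R(0)$.

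The main obstacle is the boundary $C^2$ estimate at $\partial B_R(0)$. Tangential-tangential second derivatives are controlled by differentiating $u = L$ along $\partial B_R(0)$ and using the gradient bound; tangential-normal second derivatives are handled via an auxiliary function of the form $A\,d(x) + B|x - x_0|^2 \pm \nabla_\tau u$ on a collar neighborhood of a boundary point $x_0$, with $d$ the distance to $\partial B_R(0)$ and $A, B$ chosen so that the auxiliary function is a supersolution of the linearized operator. The double-normal second derivative is the hard piece: evaluating the equation at a boundary point and combining the already-controlled tangential Hessian with hypothesis \eqref{in.8} produces a uniform bound on $u_{\nu\nu}$, provided admissibility is maintained uniformly along $\partial B_R(0)$. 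Once $|D^2 u|$ is bounded up to the boundary, the concavity \eqref{in.3} and the Krylov-Evans theorem yield a global $C^{2,\alpha}$ estimate, and Schauder bootstrap then gives $C^\infty$ regularity. The continuation method and these uniform estimates together extend the solution to all of $B_R(0) \times [0, \infty)$.
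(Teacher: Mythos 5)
Your broad outline (short-time existence, a priori estimates up to the parabolic boundary, Krylov--Evans and Schauder, continuation) matches the paper's, and the reduction to a problem with zero boundary data via $u \mapsto u - L$ is correct. The $C^0$, $C^1$ and interior $C^2$ parts are essentially what the paper does. The gap is in the boundary $C^2$ estimate, which you correctly identify as the heart of the matter but then handle with a method that does not apply here.

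You propose to bound the tangential--normal second derivatives by the barrier $Ad(x) + B|x-x_0|^2 \pm \nabla_\tau u$. Applying $\mathcal{L} = \partial_t - F^{ij}\nabla_{ij}$ to this barrier gives $-AF^{ij}d_{ij} - 2B\sum_i F^{ii}$. The first term has the favorable sign because $d$ is concave, but the second term has the opposite sign, and for the comparison to close you need the first term to dominate. This fails because the operator is only \emph{degenerate} elliptic: the structure conditions \eqref{in.2}, \eqref{in.3}, \eqref{in.6}, \eqref{in.7} guarantee $\sum_i F^{ii} \geq 1$ but impose no positive lower bound on individual eigenvalues $f_i$ (take $f = (H_2)^{1/2}$ with one large principal curvature). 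Thus $F^{ij}d_{ij}$ may be arbitrarily close to zero while $\sum_i F^{ii}$ stays bounded away from zero, and the barrier is not a supersolution. The paper avoids this by working exclusively with the vector fields from Lemma \ref{ealm.5} (translations, rotations $x_i\partial_j - x_j\partial_i$, and the parabolic dilation $ru_r - u - 2tu_t$), which are annihilated by $\mathcal{L}$ or produce a term $-2F < 0$ of strictly favorable sign independently of how degenerate $F^{ij}$ is; the actual barrier used is $h = ru_r - u - \tfrac{\delta}{\beta}(x_n + uu_n)$, a Caffarelli--Nirenberg--Spruck construction, not a distance-function barrier.

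Your proposal also omits the reflection argument of Proposition \ref{eapr.1} (moving spheres $\partial B^\delta$ and comparison of $\Sigma_t$ with its inversion $I_\delta(\Sigma_t)$). This proposition shows $u_n \le \eta$ in a collar of $\partial\Omega$ for any prescribed $\eta > 0$, and it is indispensable: without it the lower-order term $u(1 + \tfrac{\delta}{\beta}u_n)$ in the expansion of $h$ on $\{x_n = \beta\}$ cannot be controlled, and the sign of $h$ on that face of the domain cannot be forced negative by any choice of $\delta, \beta$. The smallness of $u_n$ from the reflection is what makes the CNS barrier close. Without both of these ideas the boundary $C^2$ estimate, and hence the long-time existence, does not follow.
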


Instead of studying equation \eqref{ea.1}, we will study the
following equation:
\be\label{ea.2}
\left\{\begin{aligned}
\dot{u}&=\sqrt{1+|Du|^2}F\lt(\frac{\gamma^{ik}u_{kl}\gamma^{lj}}{w}\rt)\,\,
&\mbox{in $\Omega\times[0, t_0)$}\\
u&=0\,\,&\mbox{on $\partial \Omega\times[0,t_0)$}\\
u(\cdot, 0)&=\tu_0\,\,&\mbox{in $\Omega$},\\
\end{aligned}\right.
\ee
where $\Omega$ is a convex domain, and $\tu_0$ is a smooth, admissible function defined in $\Omega$
moreover, $\tu_0|_{\partial\Omega}=0.$

First, let's recall the well known short time existence theorem (one can find it in \cite{LX1}).
\begin{theorem}
\label{eath.2}
Let $G(D^2u, Du, u)$ be a nonlinear operator that is smooth with respect to $D^2u,$ $Du,$ and $u.$
Suppose that $G$ is defined for a function $u$ belonging to an open set $\Lambda\subset C^2(\Omega)$
and $G$ is elliptic for any $u\in \Lambda,$ i.e., $G^{ij}>0,$ then the initial value problem
\be\label{ea.3}
\left\{\begin{aligned}
u_t&=G(D^2u, Du, u)\,\,&\mbox{in $\Omega\times[0, T^*)$}\\
u(x, t)&=0\,\,&\mbox{on $\partial\Omega\times[0, T^*)$}\\
u(x, 0)&=u_0\,\,&\mbox{in $\Omega\times\{0\}$}\\
\end{aligned}\right.
\ee
has a unique smooth solution $u$ when $T^*=\e>0$ small enough, except for the corner,
where $u_0\in \Lambda$ be of class $C^{\infty}(\bar{\Omega}).$
\end{theorem}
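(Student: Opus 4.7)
The plan is to prove short-time existence by a standard linearization plus fixed-point argument in parabolic H\"older spaces. Since $u_0\in\Lambda\cap C^\infty(\bar\Omega)$ and $G$ is smooth and elliptic at $u_0$, the linearization of $G$ at $u_0$,
\[
L_0 v = a^{ij} v_{ij} + b^i v_i + c\, v, \qquad a^{ij} = \frac{\partial G}{\partial u_{ij}}(D^2 u_0, Du_0, u_0),
\]
is uniformly elliptic with smooth coefficients on $\bar\Omega$, so $\partial_t - L_0$ is uniformly parabolic. By continuity of $G$ in its arguments and openness of $\Lambda$, the same parabolicity with uniform bounds persists for the linearization $L_v$ at any $v$ close to $u_0$ in $C^2$.

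The core analytic input is classical Schauder theory for linear parabolic equations (Ladyzhenskaya--Solonnikov--Ural'tseva): for any $f\in C^{\alpha,\alpha/2}(\bar\Omega\times[0,T])$, the problem $\partial_t u - L_0 u = f$ with $u|_{\partial\Omega}=0$ and $u(\cdot,0)=u_0$ admits a unique $u\in C^{2+\alpha,1+\alpha/2}$ on compact subsets of $(\bar\Omega\times[0,T])\setminus(\partial\Omega\times\{0\})$, together with the standard Schauder estimate. Using this as the linear inverse, I would set up a contraction: pick $T=\e$ small and a closed ball $\mathcal{B}$ around the time-independent extension of $u_0$ in $C^{2+\alpha,1+\alpha/2}(\bar\Omega\times[0,T])$, and define $\mathcal{T}(v)=u$ as the solution of
\[
\partial_t u - L_0 u = G(D^2 v, Dv, v) - L_0 v, \qquad u|_{\partial\Omega}=0, \qquad u(\cdot,0)=u_0.
\]
Smoothness of $G$ gives a Lipschitz bound on $v\mapsto G(D^2 v,Dv,v)-L_0 v$ in the relevant norm, and choosing $\e$ small compared to the Schauder constant times this Lipschitz constant yields both the self-map property $\mathcal{T}:\mathcal{B}\to\mathcal{B}$ and contraction. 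Banach's fixed-point theorem then produces a unique $u\in C^{2+\alpha,1+\alpha/2}$ solution.

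Smoothness for $t>0$ follows by a standard bootstrap: differencing the equation in $x$ and $t$ and applying interior and lateral-boundary Schauder estimates iteratively upgrades $u$ to $C^{k+\alpha,k/2+\alpha/2}$ on any compact subset disjoint from $\partial\Omega\times\{0\}$, hence $u\in C^\infty$ there. The main obstacle I would anticipate is the corner itself: matching the initial datum $u_0$ to the zero boundary condition requires compatibility conditions of all orders between $u_0$, $\partial\Omega$, and the structure of $G$, which are generically violated. The theorem sidesteps this by explicitly excepting the corner from the smoothness claim, so throughout the argument it suffices to operate on $(\bar\Omega\times[0,\e])\setminus(\partial\Omega\times\{0\})$.
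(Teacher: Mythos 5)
The paper does not prove Theorem~\ref{eath.2}; it is stated as a well-known short-time existence result and cited to~\cite{LX1}, so there is no in-paper argument to compare against. Your outline — linearize at $u_0$, invoke linear parabolic Schauder theory, and run a Banach fixed-point argument on a small time interval, then bootstrap to smoothness away from the corner — is the standard route, and the broad strokes are right.

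However, there is a genuine gap in the fixed-point setup. You place the ball $\mathcal{B}$ in $C^{2+\alpha,1+\alpha/2}(\bar\Omega\times[0,T])$, but the map $\mathcal{T}$ does not in general send this space into itself. For the linear problem $\partial_t u - L_0 u = f$, $u|_{\partial\Omega}=0$, $u(\cdot,0)=u_0$ to admit a solution with $\|u\|_{C^{2+\alpha,1+\alpha/2}(\bar\Omega\times[0,T])}<\infty$ up to the corner, one needs not only the zeroth-order compatibility $u_0|_{\partial\Omega}=0$ but also the first-order compatibility $\bigl(L_0 u_0 + f(\cdot,0)\bigr)\big|_{\partial\Omega}=0$; in your iteration this amounts to requiring $G(D^2u_0,Du_0,u_0)=0$ on $\partial\Omega$, which is nowhere assumed and generically fails. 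Without it, the linear solution is $C^{2+\alpha,1+\alpha/2}$ only on compact sets disjoint from $\partial\Omega\times\{0\}$, with Schauder constants that blow up as one approaches the corner, so you cannot close a contraction in the global parabolic H\"older space, and the Lipschitz-in-$T$ smallness you invoke is not available there. You correctly observe that the theorem excepts the corner, but that observation must actually be built into the function space, not just mentioned: one needs either a weighted parabolic H\"older space of the form $\sup_{0<\tau<T}\tau^{\theta}\|u\|_{C^{2+\alpha,1+\alpha/2}(\bar\Omega\times[\tau,T])}<\infty$ (the Lunardi/analytic-semigroup framework), or a two-step scheme in which one first solves an auxiliary problem with adjusted boundary data to restore compatibility and then transfers estimates back. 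Fixing the ambient space in this way is the step your proposal is missing; the rest of the argument — ellipticity propagation via openness of $\Lambda$, the Lipschitz bound on $v\mapsto G(D^2v,Dv,v)-L_0v$ from smoothness of $G$, and the interior bootstrap to $C^\infty$ for $t>0$ — is sound.
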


Our strategy is to obtain estimates on $[0, t_0).$  Then we can repeat
the process and obtain the long time existence result.

\begin{lemma}
\label{ealm.1}
Let $u$ be a solution of equation \eqref{ea.2} and $u(\cdot, t)$ is admissible for $t\in[0, t_0)$. Then
\[w=\sqrt{1+|Du|^2}\leq C\,\,\mbox{in $\bar{B}_R(0)\times[0, t_0)$}\]
where $C=C(\tu_0, \Omega)$
\end{lemma}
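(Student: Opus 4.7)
The plan is to combine the parabolic maximum principle for the auxiliary quantity $w$ (via Lemma \ref{prelm.1}(viii)) with a linear boundary barrier that exploits the convexity of $\Omega$. By Lemma \ref{prelm.1}(viii),
$$\left(\frac{d}{dt}-F^{ij}\nabla_{ij}\right)w=-wF^{ij}h^k_ih_{kj}-\frac{2}{w}F^{ij}w_iw_j\leq 0,$$
since $F^{ij}$ is positive definite by \eqref{in.2} and both terms on the right-hand side are manifestly nonpositive. Thus $w$ is a parabolic subsolution of the linearized flow operator, so the maximum principle reduces the proof to bounding $w$ on the parabolic boundary $(\bar\Omega\times\{0\})\cup(\partial\Omega\times[0,t_0))$. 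On $\bar\Omega\times\{0\}$ we trivially have $w(\cdot,0)\leq\sqrt{1+\|D\tu_0\|_{L^\infty}^2}\leq C(\tu_0)$, so everything reduces to a boundary gradient estimate on $\partial\Omega\times[0,t_0)$.

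For the lateral estimate I would use a linear upper barrier. Fix $x^*\in\partial\Omega$ and let $\nu^*$ be an outward support normal at $x^*$; by convexity, $\langle x-x^*,\nu^*\rangle\leq 0$ for all $x\in\bar\Omega$. Define
$$\bar u_{x^*}(x):=-A\langle x-x^*,\nu^*\rangle,$$
which is nonnegative on $\bar\Omega$ and vanishes at $x^*$. The graph of $\bar u_{x^*}$ is an affine hyperplane in $\R^{n+2}$, so its principal curvatures are all zero and $F\equiv 0$ by the homogeneity \eqref{in.7}; hence $\bar u_{x^*}$ is a stationary classical solution of the flow. Smoothness of $\tu_0$ together with $\tu_0|_{\partial\Omega}=0$ lets me pick $A=A(\tu_0,\Omega)$, uniform in $x^*$, so that $\tu_0\leq\bar u_{x^*}$ on $\bar\Omega$; since also $u=0\leq\bar u_{x^*}$ on $\partial\Omega$, the comparison principle for the concave parabolic operator yields $u\leq\bar u_{x^*}$ on $\bar\Omega\times[0,t_0)$. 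On the other hand, admissibility of $\tu_0$ gives $\partial_t\tu_0=0<\sqrt{1+|D\tu_0|^2}F(\kp[\tu_0])$, so $\tu_0$ is a strict subsolution and comparison also yields $u\geq\tu_0$. Since $\tu_0(x^*)=u(x^*,t)=\bar u_{x^*}(x^*)=0$, differentiating the sandwich $\tu_0(x)\leq u(x,t)\leq\bar u_{x^*}(x)$ in the inward normal direction at $x^*$ pins $\partial_{-\nu^*}u(x^*,t)$ between $\partial_{-\nu^*}\tu_0(x^*)$ and $A$, while the Dirichlet condition $u|_{\partial\Omega}=0$ forces the tangential gradient at $x^*$ to vanish. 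Hence $|Du(x^*,t)|\leq C(\tu_0,\Omega)$ uniformly in $t\in[0,t_0)$.

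The main obstacle is verifying that $A$ can be chosen uniformly in $x^*\in\partial\Omega$. Near $x^*$ this follows from Taylor expansion of $\tu_0$ together with the fact that $D\tu_0(x^*)\in\R\cdot\nu^*$ (tangential derivatives vanish since $\tu_0\equiv 0$ on $\partial\Omega$): the ratio $\tu_0(x)/(-\langle x-x^*,\nu^*\rangle)$ stays bounded by $\|D\tu_0\|_{L^\infty}+O(|x-x^*|)$. Away from $x^*$, convexity of $\Omega$ supplies a uniform lower bound on $-\langle x-x^*,\nu^*\rangle$ at points where $\tu_0>0$, so the ratio is controlled by a geometric constant depending only on $\|\tu_0\|_{L^\infty}$ and $\Omega$. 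A minor technical point is that $\bar u_{x^*}$ is a degenerate solution lying on $\partial\Gamma$; the comparison principle still applies because $F$ is concave and $u-\bar u_{x^*}$ satisfies a linear parabolic differential inequality with bounded measurable coefficients on compact subdomains. Combining the initial-time and lateral-boundary estimates with the maximum principle of the first paragraph produces the bound $w\leq C(\tu_0,\Omega)$ claimed in the lemma.
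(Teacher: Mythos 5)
Your argument follows the paper's strategy exactly in its first step: use Lemma \ref{prelm.1}(viii) plus the parabolic maximum principle to reduce to a gradient estimate on the parabolic boundary, and dispose of the initial slice by smoothness of $\tu_0$. For the lateral boundary you trap $u$ between $\tu_0$ (below) and a family of linear barriers $\bar u_{x^*}$ (above), whereas the paper traps $u$ between a constructed Gilbarg--Trudinger-type subsolution $\underline{u}$ (below) and the zero function (above): both work, so this is only a cosmetic difference. In fact your upper barrier is redundant: since $\tu_0|_{\partial\Omega}=0$ and $\tu_0$ is admissible, an interior maximum of $\tu_0$ with value $\geq 0$ would give $D\tu_0=0$ and $D^2\tu_0\leq 0$ there, hence $\kp\leq 0$ and $F\leq 0$, contradicting admissibility; thus $\tu_0<0$ in $\Omega$, and the same argument applied to $u(\cdot,t)$ yields $u\leq 0$. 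So the upper normal derivative bound $\partial_{-\nu^*}u\leq 0$ is automatic, the paragraph about choosing $A$ uniformly in $x^*$ is moot, and the ``technical point'' about comparison with the degenerate flat barrier disappears. With that simplification, your proof collapses to: $\tu_0\leq u\leq 0$, both barriers vanish on $\partial\Omega$ with bounded normal derivatives, tangential derivatives vanish by the Dirichlet condition, so $|Du|_{\partial\Omega}\leq C(\tu_0)$, and then the interior maximum principle for $w$ finishes. That is substantively the same as the paper's proof.
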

\begin{proof}
By Lemma \ref{prelm.1} item (viii) and the maximum principle we have
\be\label{ea.4}
w\leq\max\{\sup\limits_{t=0}w(\cdot, 0), \max\limits_{\partial\Omega\times[0, t_0)}w\}.
\ee
Since $\Omega$ is convex, by a standard process (see \cite{GT}), we can construct a function $\underline{u}$ in $\Omega$
such that
\[F\lt(\frac{\gamma^{ik}\underline{u}_{kl}\gamma^{lj}}{w}\rt)\geq F\lt(\frac{\gamma^{ik}\tu_{0kl}\gamma^{lj}}{w}\rt)\]
and $\underline{u}|_{\partial\Omega}=0.$
Applying the maximum principle again and we obtain
\be\label{ea.5}
0\geq u(x, t)\geq\tu_0\geq\underline{u}(x)\,\,\mbox{in $\Omega\times[0, t_0)$},
\ee
thus $|\nabla u|_{\partial\Omega}\leq C,$ proved the Lemma.
\end{proof}

Next, we will derive a uniform upper bound for the curvature $F.$
\begin{lemma}
\label{ealm.2}
Let $u$ be a solution of equation \eqref{ea.2}. Then we have
\[0<F\leq C\,\,\mbox{in $\Omega\times[0, t_0)$}\]
where $C=C(\tu_0, |Du|).$
\end{lemma}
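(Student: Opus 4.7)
The positivity $F > 0$ is part of the admissibility hypothesis, so only the upper bound requires proof. My plan is to apply the parabolic maximum principle to $\Phi := wF$, which by \eqref{ea.2} coincides with the Euclidean time derivative $u_t$.

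I would first derive the evolution of $\Phi$ from items (vii) and (viii) of Lemma \ref{prelm.1}. Writing $L := \partial_t - F^{ij}\nabla_{ij}$ and using the product rule
\[
L(wF) \;=\; F\,Lw + w\,LF - 2F^{ij} w_i F_j,
\]
the reaction terms $\pm wF\cdot F^{ij} h^k_i h_{kj}$ cancel exactly, and the residual $-(2F/w)F^{ij} w_i w_j - 2F^{ij}w_i F_j$ telescopes through the identity $\Phi_j = w_j F + w F_j$ into
\[
L\Phi \;=\; -\frac{2}{w}\,F^{ij} w_i \Phi_j.
\]
This is a linear, strictly parabolic equation (with $F^{ij} > 0$ by admissibility), with vanishing zeroth-order term and no forcing.

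I would then invoke the weak parabolic maximum principle on $\Omega \times [0, t_0)$, applied on closed slabs $\bar\Omega \times [0, t_0 - \delta]$ and letting $\delta \to 0$. On $\partial\Omega \times [0, t_0)$ the Dirichlet data $u \equiv 0$ force $u_t \equiv 0$, hence $\Phi \equiv 0$ there; at $t = 0$ the smoothness and admissibility of $\tu_0$ give $\Phi(\cdot, 0) = w_0\, F(\gamma_0^{ik}(\tu_0)_{kl}\gamma_0^{lj}/w_0) \le C(\tu_0)$. Therefore $\Phi \le C$ throughout $\Omega \times [0, t_0)$, and since $w \ge 1$,
\[
F \;=\; \Phi/w \;\le\; C,
\]
with the dependence on $|Du|$ entering through $w_0$ at $t = 0$ and through the gradient bound supplied by Lemma \ref{ealm.1}.

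The only structural point that needs care is the clean cancellation between (vii) and (viii) that annihilates the positive reaction term $FF^{ij}h^k_i h_{kj}$. Without this cancellation the evolution of $F$ alone only yields $LF \ge 0$, from which no upper bound is extractable by the maximum principle; the auxiliary choice $\Phi = wF$ is precisely what converts this obstacle into a benign linear first-order drift term. Once the cancellation is verified, the rest is a routine application of the linear parabolic weak maximum principle.
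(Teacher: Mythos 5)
Your proposal is correct, and it takes a genuinely different (and arguably cleaner) route than the paper. The paper's proof considers $\psi=\log F-\log(\nu^{n+1}-a)$ with a strictly positive offset $a$ chosen so that $\nu^{n+1}\geq 2a$; using items (v) and (vii) of Lemma~\ref{prelm.1}, the reaction terms then combine into the strictly negative quantity $-\frac{a}{\nu^{n+1}-a}\sum f_i\kp_i^2$ at an interior maximum, which rules out an interior max directly and gives $\max F/(\nu^{n+1}-a)=\max_{t=0}F/(\nu^{n+1}-a)$. You instead track $\Phi=wF=F/\nu^{n+1}$ (the $a=0$ version, equal to $u_t$), use items (vii) and (viii), and observe that the two reaction terms $\pm wF\,F^{ij}h^k_ih_{kj}$ cancel exactly, yielding the drift-only equation $L\Phi=-\tfrac{2}{w}F^{ij}w_i\Phi_j$; the weak parabolic maximum principle then reduces the bound to the parabolic boundary, where the lateral Dirichlet condition $u\equiv 0$ forces $\Phi=0$ and the initial slice gives $\Phi\le C(\tu_0)$. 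The paper's offset trick buys a pointwise contradiction at an interior maximum without needing to invoke the lateral boundary condition (though in fact $F=0$ there as well), and it fits the template used repeatedly elsewhere in the paper (e.g.\ in the $C^2$ estimate); your version is more elementary, identifies the natural quantity $u_t$ as the one obeying a clean transport equation, and makes the role of each piece of the parabolic boundary transparent. One small caveat: the lemma as stated does not explicitly hypothesize admissibility, so the positivity $F>0$ is better justified as the paper does, via item (vii) (which shows $LF=FF^{ij}h^k_ih^j_k\ge 0$) together with the maximum principle and $F>0$ at $t=0$, rather than being taken as a hypothesis.
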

\begin{proof}
The positivity of $F$ is a direct consequence of Lemma \ref{prelm.1} item (vii) and the maximum priciple.
We want to show $F\leq C.$
Let's consider $\psi=\log F-\log (\nu^{n+1}-a),$ where $\nu^{n+1}\geq 2a>0$ in $\bar{\Omega}\times[0, t_0).$
If $\psi$ achieves its maximum at an interior point $(x^*, t^*)$ then by Lemma \ref{prelm.1}, at this point we would have
\be\label{ea.6}
\lt(\frac{d}{dt}-F^{ij}\nabla_{ij}\rt)\psi=\frac{-a}{\nu^{n+1}-a}\sum f_i\kp_i^2<0
\ee
leads to a contradiction.
Therefore we conclude that
\be\label{ea.7}
\max\frac{F}{\nu^{n+1}-a}=\max\limits_{t=0}\frac{F}{\nu^{n+1}-a},
\ee
which implies the Lemma.
\end{proof}

\begin{lemma}
\label{ealm.3}
Let $u$ be a solution of equation \eqref{ea.2}, and $u(\cdot, t)$ is admissible. Then
\be\label{ea.8}
|D^2u(\cdot, t)|\leq C,
\ee
where $C=C(\tu_0, |Du|, |D^2u(\cdot, t)|_{\partial\Omega}).$
\end{lemma}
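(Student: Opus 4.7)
The plan is to apply the parabolic maximum principle to an auxiliary function built from the largest principal curvature and $\nu^{n+1}$, very much in the spirit of the proof of Theorem \ref{ieth.3} but \emph{without} the cutoff factor $\log\varphi$: since the present Lemma allows its constant to depend on the boundary Hessian $|D^2u(\cdot,t)|_{\partial\Omega}$, the boundary estimate replaces the role of the cutoff. Concretely, on $\bar\Omega\times[0,T]$ for arbitrary $T<t_0$, I consider
\[\psi=\log\kappa_{\max}-\log(\nu^{n+1}-a),\]
where $a>0$ is chosen, via the $C^1$ bound from Lemma \ref{ealm.1}, so that $\nu^{n+1}\geq 2a>0$ throughout the cylinder. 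I restrict attention to points where $\kappa_{\max}>0$; at points where $\kappa_{\max}\leq0$ the Hessian is automatically controlled, because the cone condition $\kappa[u]\in\Gamma\subset\{\sum\lambda_i>0\}$ (arising from $\Gamma\supset\Gamma_n^+$ and $\Gamma$ being an open symmetric convex cone) forces $|\kappa_{\min}|\leq(n-1)\kappa_{\max}^+$.

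Suppose $\sup\psi$ is attained at some $(x^*,t^*)$. If $t^*=0$ or $x^*\in\partial\Omega$, we are done: on the parabolic boundary $\kappa_{\max}$ is bounded in terms of $\tilde u_0$ or the prescribed boundary data $|D^2u(\cdot,t)|_{\partial\Omega}$. Otherwise diagonalize $h_{ij}$ at $(x^*,t^*)$ with $\kappa_1\geq\cdots\geq\kappa_n$, so that $F^{ij}=f_i\delta_{ij}$ and $\kappa_1=\kappa_{\max}=h_{11}>0$. The first-order condition $\nabla\psi=0$ reads
\[\frac{h_{11i}}{h_{11}}=\frac{\nu^{n+1}_i}{\nu^{n+1}-a}.\]
Repeating the computation leading to \eqref{ie.10} but with $\beta=0$ (using Lemma \ref{prelm.1}(v)(vi) and the Codazzi identity to commute $\nabla_{ii}\nabla_{11}$ and $\nabla_{11}\nabla_{ii}$), one finds that the two ``gradient'' contributions $\kappa_1^{-2}\sum f_ih_{11i}^2$ and $\sum f_i(\nu^{n+1}_i)^2/(\nu^{n+1}-a)^2$ cancel \emph{exactly} by virtue of the critical-point relation, so that
\[L\psi=-\frac{a}{\nu^{n+1}-a}\sum_i f_i\kappa_i^2+\frac{1}{\kappa_1}F^{ij,rs}h_{ij1}h_{rs1},\qquad L=\partial_t-F^{ij}\nabla_{ij}.\]

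The concavity hypothesis \eqref{in.3} gives $F^{ij,rs}h_{ij1}h_{rs1}\leq0$, while $\kappa_1>0$ and $f_1>0$ imply $\sum_i f_i\kappa_i^2\geq f_1\kappa_1^2>0$. Thus $L\psi<0$ strictly at $(x^*,t^*)$, contradicting the maximum-principle inequality $L\psi\geq0$ at an interior spacetime maximum. This rules out interior maxima and yields the bound on $\kappa_{\max}$. Combined with the cone-based control of $\kappa_{\min}$ and the $w$-bound of Lemma \ref{ealm.1}, this translates into the sought bound on $|D^2u(\cdot,t)|$.

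The main obstacle I anticipate is the bookkeeping for the exact cancellation of the gradient terms in \eqref{ie.10}: without that cancellation, the sign of $L\psi$ is not manifest, and one would be forced into the two-case dichotomy ($\kappa_n<\theta\kappa_1$ vs.\ $\kappa_n\geq-\theta\kappa_1$) used in Theorem \ref{ieth.3}. The pleasant feature of the present setting is that, because we allow the estimate to depend on boundary Hessian data, we can afford to drop the cutoff $\varphi$, and the argument then collapses to a one-line application of concavity once the critical-point cancellation is noticed.
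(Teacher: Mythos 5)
Your proof is correct and follows exactly the route the paper intends: the paper's own proof is the single remark that one should repeat the computation of Theorem \ref{ieth.3} with $\beta=0$, and you have correctly filled in the essential point, namely that with $\beta=0$ the critical-point identity makes the two gradient terms in \eqref{ie.10} cancel exactly, so concavity of $F$ alone gives $L\psi<0$ and the two-case dichotomy of Theorem \ref{ieth.3} becomes unnecessary. One tiny remark on a side point: the inclusion $\Gamma\subset\{\sum\lambda_i>0\}$ that you use to conclude $\kappa_{\max}>0$ follows from \eqref{in.9} together with $f>0$ on $\Gamma$ rather than from $\Gamma_n^+\subset\Gamma$ alone, but the conclusion you draw is correct and the argument is unaffected.
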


This Lemma can be proved by a similar calculation as in the proof of Theorem \ref{ieth.3}.  The only diffenrence is
we need to choose $\beta=0$ here, so we will skip the proof.

Therefore, in order to get the $C^2$ estimate, we only need to prove the following:
\begin{lemma}
\label{ealm.4}
Let $u$ be a solution of equation \eqref{ea.2} and $u(\cdot, t)$ is admissible. Then we have
\be\label{ea.9}
|D^2u(\cdot, t)|_{\partial\Omega}\leq C\,\,\mbox{for $t\in[0, t_0)$}
\ee
where $C=C(\tu_0).$
\end{lemma}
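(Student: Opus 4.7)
The plan is to bound $|D^2 u|$ on $\partial\Omega \times [0, t_0)$ by the classical decomposition at each boundary point. Fix $x_0 \in \partial\Omega$ and $t \in [0, t_0)$, and choose an orthonormal frame $\{e_1, \ldots, e_n\}$ tangent to $\partial\Omega$ at $x_0$ with $e_{n+1} = \nu$ the inner unit normal. I would estimate the three blocks $u_{\alpha\beta}$ (tangent--tangent), $u_{\alpha\nu}$ (mixed), and $u_{\nu\nu}$ (pure normal) separately, in the spirit of Caffarelli--Nirenberg--Spruck, adapted to the present parabolic setting.

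The tangent--tangent block is essentially free. Since $u(\cdot, t) \equiv 0$ on $\partial\Omega$ for every $t \in [0, t_0)$, differentiating twice along $\partial\Omega$ in geodesic coordinates yields
\[u_{\alpha\beta}(x_0, t) = -u_\nu(x_0, t)\, \Pi_{\alpha\beta}(x_0),\]
where $\Pi$ is the second fundamental form of $\partial\Omega$. Combined with the uniform gradient bound from Lemma \ref{ealm.1} and the smoothness of $\partial\Omega$, this gives $|u_{\alpha\beta}(x_0, t)| \le C$.

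For the mixed block I would run the standard barrier argument. Extend a tangential unit vector $\tau$ into a one-sided neighborhood of $x_0$ in $\Omega$ and consider $\pm\partial_\tau u$. Translation invariance of the equation together with the bound on $F$ produces $|L(\pm\partial_\tau u)| \le C\sum f_i$, where $L := \partial_t - F^{ij}\nabla_{ij}$. Using the convex subsolution $\underline{u}$ produced in Lemma \ref{ealm.1} and the convexity of $\Omega$, I would then form a barrier
\[\Psi = A(\underline{u} - u) + B|x - x_0|^2\]
on $(\Omega \cap B_r(x_0)) \times [0, t_0)$, with $1 \ll B \ll A$ chosen so that $L\Psi \le -\delta\sum f_i$, using (\ref{in.10}) to keep $\sum f_i \ge 1$ in this estimate; on the parabolic boundary, the tangent--tangent bound together with $u - \underline{u} \ge c\, d$ near $x_0$ (where $d$ is the distance to $\partial\Omega$) ensures $\Psi \ge |\partial_\tau u|$. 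Comparison yields $|\partial_\tau u| \le \Psi$ in the half-ball, and differentiating in $\nu$ at $x_0$ returns $|u_{\alpha\nu}(x_0, t)| \le C$.

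For the pure normal block I would argue by contradiction: suppose $u_{\nu\nu}(x_0, t_k) \to \infty$ along some sequence. With the tangent--tangent and mixed blocks of $D^2 u$ already controlled, and $|Du|$ bounded, a perturbation analysis of the shape operator $(\gamma^{ik}u_{kl}\gamma^{lj})/w$ shows that one principal curvature of $\Sigma_{t_k}$ tends to $\infty$ while the other $n-1$ stay in a bounded set; admissibility of $u(\cdot, t)$ keeps them in $\Gamma$, so by compactness they lie in some compact $\Gamma_0 \subset \Gamma$. Applying the growth condition (\ref{in.8}) with this $\Gamma_0$ then forces $F \to \infty$, contradicting the uniform upper bound from Lemma \ref{ealm.2}. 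Hence $u_{\nu\nu}(x_0, t) \le C$, which together with the previous two steps gives (\ref{ea.9}). The main obstacle is the mixed step, where the barrier parameters must be arranged uniformly in $(x_0, t)$ and the nonlinear error terms controlled using only concavity, homogeneity, and convexity of $\Omega$; once this is in place, the tangent--tangent part is immediate and the pure normal part reduces to the structural assumption (\ref{in.8}).
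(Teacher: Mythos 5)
Your tangent--tangent and normal--normal blocks agree with the paper: the former follows from $u\equiv 0$ on $\partial\Omega$ together with Lemma~\ref{ealm.1}, and the latter is indeed obtained in the paper by combining the upper bound on $F$ from Lemma~\ref{ealm.2} with condition~\eqref{in.8}. The mixed block, however, is where nearly all of the paper's work lies, and your sketch has a genuine gap there.

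You propose comparing $\pm\partial_\tau u$ with the barrier $\Psi = A(\underline{u}-u)+B|x-x_0|^2$. For a fixed (constant) tangential direction $\tau$, the restriction of $\partial_\tau u$ to $\partial\Omega$ near $x_0$ is only $O(|x-x_0|)$, not $O(|x-x_0|^2)$: since $u|_{\partial\Omega}=0$, one has $\partial_\tau u = -u_\nu\langle\tau,\nu\rangle$ on $\partial\Omega$, and $\langle\tau,\nu\rangle$ vanishes only to first order at $x_0$ because $\partial\Omega$ is curved. Your barrier vanishes to second order on $\partial\Omega\cap B_r(x_0)$ (where $\underline{u}-u=0$), so it cannot dominate $\partial_\tau u$ near $x_0$; the comparison fails precisely where it matters. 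If instead you extend $\tau$ as a vector field tangent to $\partial\Omega$, you lose the commutation identity $\mathcal{L}\,\partial_\tau u = 0$, since $\partial_\tau$ is no longer an infinitesimal rigid motion. The paper fixes exactly this with the Caffarelli--Nirenberg--Spruck operator $T_\alpha = \partial_\alpha + \lambda_\alpha(x_\alpha\partial_n - x_n\partial_\alpha)$, which is a rigid motion (so $\mathcal{L}T_\alpha u = 0$ by Lemma~\ref{ealm.5}) and satisfies $|T_\alpha u| \leq C|x|^2$ on $\partial\Omega$ (equation~\eqref{ea.35}). Some such correction is essential.

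A second, and more substantial, omission: the paper's boundary barrier is $h = ru_r - u - \frac{\delta}{\beta}(x_n + uu_n)$, constructed from the dilation operator and the ambient $\R^{n+1}$-rotation, with $\mathcal{L}(ru_r-u) = -2F < 0$ coming from the degree-one homogeneity of $f$ and $\mathcal{L}(x_n+uu_n)=0$ from rotational invariance. To make this barrier dominate $T_\alpha u$ on the top face $\{x_n=\beta\}$ of the truncated half-ball, the paper invokes Proposition~\ref{eapr.1} --- the Alexandrov-type reflection-across-spheres argument --- which establishes that $u_n \leq \eta$ in a $\delta_0$-neighborhood of $\partial\Omega$ uniformly in $t\in[0,t_0)$. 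This \emph{smallness} (not just boundedness) of the boundary gradient is what makes the comparison on $\{x_n=\beta\}$ close in equation~\eqref{ea.41}, and it has no analogue in your proposal. Without it, the terms involving $u_n$ in any reasonable barrier comparison near $\partial\Omega$ cannot be controlled, so the mixed estimate does not follow from the ingredients you list. In short: the three-block skeleton is right, the normal block is handled the same way, but the mixed block needs both the rotation trick of Lemma~\ref{ealm.5} and the reflection argument of Proposition~\ref{eapr.1}, neither of which appears in your sketch.
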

In order to prove Lemma \ref{ealm.4}, we will need to prove the following proposition first, the idea goes back to \cite{CNS5}.

\begin{proposition}
\label{eapr.1}
Let $u$ be a solution of equation \eqref{ea.2} and $u(\cdot, t)$ is admissible. Then given any $\eta>0$
there exists $\delta_0=\delta_0(\tu_0)>0$ such that in a $\delta_0$ neighborhood of $\partial\Omega$ we have
$u_n\leq\eta$ for any $t\in[0, t_0),$ where $u_n$ is the derivative of $u$ in the interior normal direction.
\end{proposition}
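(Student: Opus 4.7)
The approach is a CNS5-style local boundary barrier argument, which pins $u$ to zero on a collar of $\partial\Omega$ and thereby controls its normal derivative there. Set $d(x) = \mathrm{dist}(x, \partial\Omega)$; since $\Omega$ is smooth and convex, $d \in C^2(\mathcal N_\delta)$ on a collar $\mathcal N_\delta = \{0 < d < \delta\}$ for some small $\delta > 0$, with $-D^2 d \geq 0$ there. This concavity of $d$ is the geometric ingredient that will make the barrier strictly admissible.

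For each fixed $\eta > 0$, I take the stationary comparison function
\[
\underline v(x) = -\eta \, d(x) - A \, d(x)^2,
\]
with $A > 0$ large, chosen depending on $\tu_0$ and the principal curvatures of $\partial\Omega$. The Hessian is
\[
D^2 \underline v = -(\eta + 2Ad)\, D^2 d - 2A\, Dd \otimes Dd,
\]
which, using $-D^2 d \geq 0$, is positive semidefinite up to a controlled lower-order error; taking $A$ large puts the eigenvalues of $\gamma^{ik}(\underline v)_{kl}\gamma^{lj}/w$ deep inside $\Gamma$, and \eqref{in.8} then ensures $F(\kp[\underline v]) > 0$ uniformly across $\mathcal N_\delta$. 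Hence $\underline v$ is a stationary subsolution of \eqref{ea.2}.

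Next I verify $\underline v \leq u$ on the parabolic boundary of $\mathcal N_\delta \times [0, t_0)$: (i) both vanish on $\partial\Omega$; (ii) on $\{d = \delta\}$, taking $A$ large makes $\underline v$ smaller than the uniform lower bound $u \geq \underline u$ of Lemma \ref{ealm.1}; (iii) at $t = 0$, $\underline v \leq \tu_0$ uses the near-boundary profile of $\tu_0$, which is flat in a collar of $\partial\Omega$ thanks to the $\min_\e$ construction, so $\tu_0 \equiv 0 \geq \underline v$ there. The parabolic comparison principle applied to \eqref{ea.2} then gives $u \geq \underline v$ on $\mathcal N_\delta \times [0, t_0)$. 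Since both vanish on $\partial\Omega$, one-sided differentiation gives $\partial_n u \geq \partial_n \underline v = -\eta$ on $\partial\Omega$ uniformly in $t$, and combined with $u \leq 0$ from Lemma \ref{ealm.1} forcing $\partial_n u \leq 0$ on $\partial\Omega$, this yields $|u_n| \leq \eta$ on $\partial\Omega$.

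To promote the boundary estimate to the $\delta_0$-neighborhood claimed in the proposition, I combine the pointwise pinch $0 \geq u \geq -\eta d - A d^2$ with the uniform bound on $|Du|$ from Lemma \ref{ealm.1} and interior estimates on sub-collars compactly contained in $\mathcal N_\delta$ (in the spirit of Theorem \ref{ieth.3}) to produce a uniform-in-$t$ modulus of continuity for $u_n$ as $d \to 0$; shrinking $\delta_0$ to absorb the resulting oscillation gives $u_n \leq \eta$ throughout $\mathcal N_{\delta_0} \times [0, t_0)$. The main obstacle I anticipate is keeping the barrier strictly admissible uniformly as $\eta \downarrow 0$: the convexity of $\Omega$ is indispensable there, the coupling between $A$ and the principal curvatures of $\partial\Omega$ must be handled carefully, and \eqref{in.8} is precisely what keeps $F(\kp[\underline v])$ bounded below in the collar.
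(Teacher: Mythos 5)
Your approach (a stationary barrier pinning $u$ near $\partial\Omega$) is genuinely different from the paper's, which runs an Alexandrov reflection in a family of spheres $\partial B_1(e_\delta)$, using the M\"obius covariance of the principal curvatures under inversion and the maximum principle to rule out the reflected surface touching or becoming tangent to $\Sigma_t$. But your barrier does not work as stated, for several compounding reasons.

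\textbf{The barrier is not admissible.} With $\underline v=-\eta d-Ad^2$ one computes $D^2\underline v=-(\eta+2Ad)D^2d-2A\,Dd\otimes Dd$. Since $d$ is concave for convex $\Omega$, the first term is positive semidefinite (and small, of order $\eta$ near $\partial\Omega$), but the second term is rank one and \emph{negative} with eigenvalue $-2A$ in the normal direction $Dd$. The claim that $D^2\underline v$ is ``positive semidefinite up to a controlled lower-order error'' reads the sign of the rank-one term backwards: for $A$ large, $\kappa[\underline v]$ has one very negative entry and $n-1$ small nonnegative entries, which for general cones (e.g.\ $\Gamma=\Gamma_n^+$, or any cone not much wider than it) lies outside $\bar\Gamma$, so $F(\kappa[\underline v])$ is not defined, let alone nonnegative. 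The appeal to \eqref{in.8} is a misreading: that condition guarantees $f$ becomes large when one curvature is \emph{increased}, it says nothing about tolerating a large negative curvature. Conversely, taking $A$ small enough to keep $\underline v$ admissible destroys the boundary matching: for $\eta$ much smaller than the Lipschitz constant of $\tilde u_0$, the graph of $-\eta d-Ad^2$ lies \emph{above} $\tilde u_0$ near $\partial\Omega$ at $t=0$, so the comparison on the parabolic boundary fails. (Incidentally, the assertion that $\tilde u_0\equiv 0$ in a collar cannot be right: a function that is flat near $\partial\Omega$ has identically vanishing second fundamental form there, and $f=0$ on $\partial\Gamma$, so it would not be admissible.)

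\textbf{The passage from the boundary to a neighborhood is circular.} The barrier, even if it worked, only gives a lower bound $u\geq\underline v$ and hence the one-sided bound $u_n\geq-\eta$ \emph{on} $\partial\Omega$; on $\partial\Omega$ itself, $u_n\leq 0\leq\eta$ is trivial since $u\leq 0$ with equality on $\partial\Omega$. The substance of the proposition is the one-sided bound $u_n\leq\eta$ in a full collar $\{0<d<\delta_0\}$, and the pinch $0\geq u\geq -\eta d-Ad^2$ does not control $u_n$ at interior points of the collar (the difference quotient bounds diverge as $\epsilon\to 0$). To upgrade to a modulus of continuity for $u_n$ you invoke interior $C^2$-type estimates near $\partial\Omega$, but in the bounded-domain problem \eqref{ea.2} the only available $C^2$ estimate is Lemma \ref{ealm.3}, whose constant depends on $|D^2u|_{\partial\Omega}$, which in turn is the content of Lemma \ref{ealm.4}, which uses the very proposition you are proving. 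The cutoff $(M-u)_+$ from Theorem \ref{ieth.3} never degenerates in \eqref{ea.2} (where $u\leq 0$), so it does not localize away from $\partial\Omega$ and gives no admissible interior estimate here. The paper's reflection argument avoids all of this: the inequality $(X-e_\delta)\cdot\nu(X)<0$ for $X\in\Sigma_t\cap\partial B^\delta$, derived from the maximum principle applied to $\tilde u-u$ plus the Hopf lemma, directly yields $u_n(0,x_n)<2|\tilde C\delta-u|<\eta$ for $x_n,\delta$ small, which is a genuine interior-of-the-collar bound with no recourse to a priori $C^2$ control.
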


Suppose in the following that the origin belongs to $\partial\Omega$ and that the $x_n$-axis
is the interior normal.
\begin{proof}
Without loss of generality, we may assume that the graph $\Sigma_0=\{(x, \tu_0(x))|x\in\Omega\}$
lies in the ball $B_{1/2}$ with center at $(0, 0, \cdots, 1/2, 0)$ in $\R^{n+1}.$
Then since
\[u_t=\frac{1}{w}F\lt(\frac{\gamma^{ik}u_{kl}\gamma^{lj}}{w}\rt)>0\,\,\mbox{in $\Omega\times[0, t_0)$}\]
and $u(x, t)\leq 0$ we get
\[\Sigma_t=\{(x, u(x, t))|x\in\Omega\}\,\, t\in[0, t_0)\]
all lie in $B_{1/2}.$ Moreover, by the Hopf Lemma, we have $u_n(0, t)<0$ for $t\in[0, t_0).$

Now, consider a family of reflections $I_\delta$ depending on a parameter $\delta>0,$ in the boundary of
the unit ball in $\R^{n+1}:$ $B_1(e^\delta)=B^\delta,$ with center $e_\delta=(0, \cdots, 0, 1+\delta, \tC\delta),$
where $\tC>1$ to be determined.

To start, $\Sigma_t,$ $0\leq t<t_0$ is contained in $B_1(e^0).$ As $\delta$ becomes positive,
a portion of $\Sigma_t$ near the origin in $\R^{n+1}$ lies outside $B^\delta.$ For a very small $\delta,$
the reflection $I_\delta(\Sigma_t\cap\mathcal{C}B^\delta)$ doesn't touch $\Sigma_t\cap B^\delta.$
Furthermore, at any $X^0\in\Sigma_t\cap\partial B^\delta,$ $I_\delta(\Sigma_t\cap\mathcal{C}B^\delta)$
is not tangent to $\Sigma_t$ for $t\in [0, t_0).$

Suppose there is a first value of $\delta\leq\delta_0=\delta_0(\tu_0)$ for which this statement fails, i.e., there exists
$t^*\in (0, t_0)$ such that either
\[\mbox{(a) $I_\delta(\Sigma_{t^*}\cap\mathcal{C}B^\delta)$ touches $\Sigma_{t^*}$ at a point $I_\delta(X^0)$}\]
or
\[\mbox{(b) $I_\delta(\Sigma_{t^*}\cap\mathcal{C}B^\delta)$ is tangent to $\Sigma_{t^*}$ at some point $X^0\in\partial B^\delta\cap\Sigma_{t^*}$.}\]
We claim: For $\delta\leq\delta_0=\delta_0(u_0),$ $t<t_0,$ both cases are impossible.

If the claim is true, it follows that for $\delta\leq\delta_0,$ $t<t_0,$ a point $X\in\Sigma_t$ belongs to $\partial B^\delta$
then we have
\be\label{ea.10}
(X-e_\delta)\cdot\nu(X)<0.
\ee
In particular, if we take $X=(0, \cdots, 0, x_n, u(0, x_n, t))$ then
\be\label{ea.11}
(x_n-1-\delta)\nu_n(x)+(u-\tC\delta)\nu^{n+1}(x)<0,
\ee
which is equivalent to
\be\label{ea.12}
\frac{(1+\delta-x_n)u_n}{w}+\frac{u-\tC\delta}{w}<0.
\ee
Thus we have,
\be\label{ea.13}
u_n(0, x_n)<2|\tC\delta-u|<\eta
\ee
if $x_n,$ $\delta_0$ are sufficiently small.

Therefore, we only need to prove our claim. We will prove it by contradiction.
Suppose case (a) first occurs at $t^*>0$.
Let's denote
\be\label{ea.14}
\tilde{X}=I_\delta(X)=\frac{X-e_\delta}{|X-e_\delta|^2}.
\ee
By a straightforward calculation we have
\be\label{ea.15}
\begin{aligned}
\dot{\tilde{X}}&=\frac{\dot{X}}{|X-e_\delta|^2}-2\frac{(X-e_\delta)\lt<\dot{X}, X-e_\delta\rt>}{|X-e_\delta|^4}\\
&=\frac{F}{|X-e_\delta|^2}\tilde{\nu},
\end{aligned}
\ee
where $\tilde{\nu}$ is the unit normal of the reflected surface.
Moreover, we can also compute the principle curvature of the reflected surface and get
\be\label{ea.16}
\tilde{\kp}=\kp|X-e_\delta|^2+2\lt<X-e_\delta, \nu\rt>,
\ee
thus
\be\label{ea.17}
f(\tilde{\kp})=f(\kp(X))|X-e_\delta|^2+2\lt<X-e_\delta, \nu\rt>\sum f_i.
\ee
When $t=0,$ since for any $X\in\Sigma_0\cap\mathcal{C}B^\delta$ there is a $\delta'<\delta$
such that $X\in\partial B^{\delta'}$ and
\[(X-e_{\delta'})\cdot\nu(X)<0.\]
Thus
\be\label{ea.18}
\begin{aligned}
2(X-e_\delta)\cdot\nu(X)&=2(X-e_{\delta'})\cdot\nu{X}+2(e_{\delta'}-e_{\delta})\cdot\nu(X)\\
&<2(e_{\delta'}-e_\delta)\cdot\nu(X)\\
&=2\nu_n(\delta'-\delta)+2\nu^{n+1}\tC(\delta'-\delta)\\
&\leq-c\tC(\delta-\delta').
\end{aligned}
\ee
Furthermore,
\be\label{ea.19}
\begin{aligned}
|X-e_\delta|^2&=|X-e_{\delta'}+e_{\delta'}-e_\delta|^2\\
&=1+2(X-e_{\delta'})\cdot(e_{\delta'}-e_{\delta})+|e_{\delta'}-e_\delta|^2\\
&\leq 1+A(\delta-\delta')+\tC(\delta-\delta')(\tC\delta'-u(x))+\tC^2\delta(\delta-\delta')\\
&\leq 1+A(\delta-\delta')+A\tC(\delta-\delta')x_n,
\end{aligned}
\ee
here we used $\tC^2\delta_0\leq 1$ (we can always choose $\delta_0>0$ small such that this inequality holds) and  $-\tu_0(x)\leq Ax_n.$

Since $|X-e_{\delta'}|=1$ we have for $x=(x', x_n)$
\be\label{ea.20}
|x'|^2+(1+\delta'-x_n)^2+|\tC\delta'-\tu_0(x)|^2=1,
\ee
which implies
\be\label{ea.21}
|x'|^2+(1/2-x_n)^2+2(1/2-x_n)(1/2+\delta')+(1/2+\delta')^2+(\tC\delta'+Ax_n)\geq1.
\ee
The sum of the first two terms is at most $1/4,$ hence we get
\be\label{ea.22}
x_n(1+2\delta')\leq A(\delta'+x_n^2)
\ee
which yields
\be\label{ea.23}
x_n\leq A\delta'.
\ee
Plug it back into \eqref{ea.19} we have
\be\label{ea.24}
|X-e_\delta|^2\leq 1+A(\delta-\delta').
\ee
Combine with \eqref{ea.16} we obtain
\be\label{ea.25}
\tilde{\kp}\leq\kp[1+A(\delta-\delta')]-c\tC(\delta-\delta'),
\ee
therefore
\be\label{ea.26}
f(\tilde{\kp})\leq f(\kp)(1+A\tau)-c\tC\tau,
\ee
where $\tau=\delta-\delta'.$
Since $f(\kp(\tilde{x}))\geq f(\kp(x))-B\tau$ for $B$ depends on $|\tu_0|_{C^3}$ under control,
we can choose $\tC$ large depends on $|f|_{C^0}$ such that
$f(\tilde{\kp})<f(\kp(\tilde{x})),$ then we have
at $t=0$ the reflected surface $\tilde{\Sigma}_0=I_\delta(\Sigma_0)$
lies above $\Sigma_0$ for $\delta\leq\delta_0.$

In order to show case $(a)$ doesn't occur, we only need to show
$\tilde{\Sigma}_t$ lies above $\Sigma_t$ for $t\in(0, t_0)$ and there is no interior
touching point. If not, suppose $(a)$ first occur at $X^*=(x^*, u(x^*, t^*)).$
At the point $I_\delta(X^*)$ we have
\be\label{ea.27}
\begin{aligned}
\frac{\dot{\tu}}{w}-f(\tilde{\kp})&=\frac{f(\kp(X^*))}{|X^*-e_\delta|^2}-f(\kp(X^*))|X^*-e_\delta|^2
+c\tC(\delta-\delta')\sum f_i\\
&\geq f(\kp(X^*))\lt(\frac{1}{A\tau}-(1+A\tau)\rt)+c\tC\tau>0
\end{aligned}
\ee
when $\tC$ large under control.
Since
\be\label{ea.28}
\frac{\dot{u}}{w}-f(\kp(\tilde{x^*}))=0
\ee
we have
\be\label{ea.29}
L(\tilde{u}-u)>0\,\,\mbox{at the point $I_\delta(X^*)$}
\ee
by the maximum principle we have a contradiction. Therefore, case $(a)$ doesn't occur.

Now, let's turn to case $(b).$ Same as before, for any $X\in \Sigma_t\cap\mathcal{C}B^\delta$
we can derive $L(\tilde{u}-u)>0.$ Since $\tilde{\Sigma_t}$ lies above $\Sigma_t$ for any $t\in[0, t_0)$,
by the Hopf Lemma, case $(b)$ cannot occur.
\end{proof}

Now we are going to apply Proposition \ref{eapr.1} to prove Lemma \ref{ealm.4}.
\begin{proof} (proof of Lemma \ref{ealm.4})
In the following, we denote $G(D^2u, Du, u_t)=\frac{u_t}{w}-F=0.$
Similar to \cite{CNS3} we can prove
\begin{lemma}
\label{ealm.5}
Suppose $f$ satisfies \eqref{in.2}, \eqref{in.3}, \eqref{in.6}, and \eqref{in.7}.
Then $\mathcal{L}(x_iu_j-x_ju_i)=0,$ $\mathcal{L}u_i=0,$ $1\leq i, j\leq n,$
and $\mathcal{L}u_t=0.$
\end{lemma}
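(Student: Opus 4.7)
The strategy is to exploit the invariances of the fully nonlinear operator $G(D^2u, Du, u_t) = u_t/w - F(\gamma^{ik}u_{kl}\gamma^{lj}/w)$. This operator is \emph{autonomous} (it has no explicit $x$ or $t$ dependence) and \emph{rotationally invariant} in the spatial variables, since $F$ depends on $u$ only through the principal curvatures of its graph, which are eigenvalues of $g^{-1}(u_{ij}/w)$ with $g_{ij} = \delta_{ij}+u_iu_j$ and therefore invariant under the action $(D^2u,Du)\mapsto(RD^2uR^T,RDu)$ for $R\in O(n)$. Writing the linearization as $\mathcal{L}v := G_{r_{kl}}v_{kl} + G_{p_k}v_k + G_s v_t$, each continuous symmetry of $G$ produces, by the standard Noether-type argument, a function in the kernel of $\mathcal{L}$.

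For $\mathcal{L}u_i = 0$ and $\mathcal{L}u_t = 0$ one simply differentiates the identity $G(D^2u, Du, u_t)=0$ in $x_i$ or in $t$; since $G$ has no explicit $(x,t)$ dependence, the chain rule produces exactly $\mathcal{L}u_i = 0$ and $\mathcal{L}u_t = 0$. For the rotational generator $x_iu_j - x_ju_i$, I would first compute, by the Leibniz rule and the previous step,
\[
\mathcal{L}(x_iu_j) \;=\; 2G_{r_{il}}u_{jl} + G_{p_i}u_j + x_i\,\mathcal{L}u_j \;=\; 2G_{r_{il}}u_{jl} + G_{p_i}u_j,
\]
and symmetrically for $\mathcal{L}(x_ju_i)$; subtracting gives
\[
\mathcal{L}(x_iu_j-x_ju_i) \;=\; 2\bigl(G_{r_{il}}u_{jl}-G_{r_{jl}}u_{il}\bigr) + G_{p_i}u_j - G_{p_j}u_i.
\]
The claim is that the right-hand side vanishes identically, and this is exactly the infinitesimal form of the rotational invariance of $G$.

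To see this I would specialize $R = I + \epsilon A$ with $A$ antisymmetric in the invariance relation $G(RMR^T, Rp, s) = G(M, p, s)$ and differentiate at $\epsilon=0$, obtaining $G_{r_{kl}}(AM+MA^T)_{kl} + G_{p_k}(Ap)_k = 0$ at $M = D^2u$, $p = Du$; choosing $A_{kl} = \delta_{ki}\delta_{lj}-\delta_{kj}\delta_{li}$ yields precisely the required identity. Equivalently, one can avoid the matrix computation by applying $\mathcal{L}$ to the one-parameter family $u^\epsilon(x,t) := u(R_\epsilon^Tx,t)$, where $R_\epsilon$ is rotation in the $(i,j)$-plane; each $u^\epsilon$ still solves $G=0$, so differentiating at $\epsilon=0$ gives $\mathcal{L}(-x_iu_j+x_ju_i)=0$ directly. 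The only nontrivial point to verify — and this is the one worth being careful about rather than a genuine obstacle — is the $O(n)$-invariance of the map $(D^2u, Du)\mapsto F(\gamma^{ik}u_{kl}\gamma^{lj}/w)$, which relies on $f$ being a symmetric function of its arguments (so that $f(\kappa[\cdot])$ depends only on the unordered spectrum) together with the tensorial transformation of the first and second fundamental forms of the graph under spatial rotations.
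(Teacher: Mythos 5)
Your proof is correct and follows essentially the route the paper alludes to by citing \cite{CNS3} (the paper itself supplies no explicit proof of this lemma, only the citation). Differentiating the autonomous equation $G(D^2u,Du,u_t)=0$ in $x_i$ and in $t$ gives $\mathcal{L}u_i=\mathcal{L}u_t=0$, and the infinitesimal $O(n)$-invariance of $G$ — which you verify both by the antisymmetric-matrix computation and, more cleanly, by differentiating the one-parameter family of rotated solutions $u(R_\epsilon^T x,t)$ — gives $\mathcal{L}(x_iu_j-x_ju_i)=0$; the Leibniz-rule bookkeeping and the appeal to symmetry of $f$ (so that $F$ depends only on the unordered spectrum) are exactly right.
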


If we let $v(x, t)=\frac{1}{a}u(ax, a^2t),$ then at $a=1$ we have
\be\label{ea.30}
\begin{aligned}
\frac{d}{da}G(D^2v, Dv, v_t)&=\mathcal{L}\lt(\frac{d}{da}v\rt)\\
&=\mathcal{L}(ru_r-u2tu_t)=0,
\end{aligned}
\ee
here $r$ is the polar coordinate.
Therefore we have
\be\label{ea.31}
\mathcal{L}(ru_r-u)=-\mathcal{L}(2tu_t)=-\frac{2u_t}{w}=-2F<0.
\ee
Following \cite{CNS5}, by doing infinitesimal rotation in $\R^{n+1}$ we let
\be\label{ea.32}
v(x, t)=u(x', x_n+u(x, t)d\theta, t)+x_nd\theta+\,\,\mbox{higher order in $d\theta,$}
\ee
and at $x$ we have
\[G(D^2v, Dv, v_t)=0.\]
We compute the first order term in $d\theta$ and get
\be\label{ea.33}
\mathcal{L}(x_n+u(x, t)u_n)=0
\ee

Now consider an arbitrary point on $\partial\Omega,$ which we may assume to be the origin of $\R^n,$
and choose the coordinates so that the positive $x_n$ axis is the interior normal to $\partial\Omega$ at the origin.
We may assume that the boundary near the origin is represented by
\[x_n=\rho(x')=\frac{1}{2}\sum\limits_{\alpha=1}^{n-1}\lambda_\alpha x_\alpha^2+O(|x'|^3)\]
where $\lambda_\alpha>0$ are the principal curvatures of $\partial\Omega$ at the origin.

Since $u(x', \rho(x'), t)\equiv 0$ on $\partial\Omega\times[0, t_0),$ we have at the origin
\[u_{\alpha\beta}+u_n\rho_{\alpha\beta}=0\,\,\mbox{and $u_\alpha+u_n\lambda_\alpha x_\alpha=0$}.\]
Now let $T_\alpha=\partial_\alpha+\lambda_\alpha(x_\alpha\partial_n-x_n\partial_\alpha),$
then we have
\be\label{ea.34}
|T_\alpha u|\leq C \,\,\mbox{in a small neighborhood of the origin,}
\ee
and
\be\label{ea.35}
|T_\alpha u|\leq C|x|^2\,\,\mbox{on $\partial\Omega$ near the origin.}
\ee
Moreover, by Lemma \ref{ealm.5} we have $\mathcal{L}T_\alpha u=0.$
In the following, $\Omega_\beta$ denote a small region $\Omega\cap\{x_n<\beta\}.$
For $\delta,$ $\beta$ small, set
\be\label{ea.36}
h=ru_r-u-\frac{\delta}{\beta}(x_n+uu_n),
\ee
then by \eqref{ea.31} and \eqref{ea.33}
we have
\be\label{ea.37}
\mathcal{L}h<0\,\,\mbox{in $\Omega_\beta.$}
\ee
On the lower bound of $\Omega_\beta,$ since $u=0$ we have
\be\label{ea.38}
|ru_r|\leq C_1|x|^2\,\,\mbox{also $x_n\geq a|x|^2, a>0.$}
\ee
Therefore,
\be\label{ea.39}
\begin{aligned}
h&=ru_r-\frac{\delta}{\beta}x_n\\
&\leq(C_1-\frac{\delta}{\beta}a)|x|^2.
\end{aligned}
\ee
For $\frac{\delta}{\beta}$ and $A$ large we have

\be\label{ea.40}
Ah\leq-C|x|^2\,\,\mbox{on the lower boundary of $\Omega_\beta.$}
\ee
Finally, on $x_n=\beta,$ by gradient estimate Lemma \ref{ealm.1} we have $0\leq -u\leq C\beta.$
Thus,
\be\label{ea.41}
\begin{aligned}
h&=\beta u_n+\sum\limits_{\alpha=1}^{n-1}x_\alpha u_\alpha-u(1+\frac{\delta}{\beta}u_n)-\delta\\
&\leq\frac{3}{2}\eta\beta+C\beta^{1/2}-u(1+\frac{3\delta}{2\beta}\eta)-\delta\\
&\leq C\beta^{1/2}+\delta\lt(\frac{3}{2}C\eta-1\rt),
\end{aligned}
\ee
where we used Proposition \ref{eapr.1}.

Choose $\eta$ and $\beta$ so that $\frac{3}{2}C\eta<\frac{1}{2}$ and $C\beta^{1/2}\leq\frac{\delta}{4}$
and $\delta/\beta$ large as required in the preceding paragraph. Then we obtain
\[h\leq-\frac{\delta}{4}\,\,\mbox{on $x_n=\beta$}.\]
Therefore, we can choose $A$ large such that $Ah\leq-C$ on $\{x_n=\beta\}.$
By the maximum principle we have $h\leq\pm T_\alpha u$ in $\Omega_\beta\times[0, t_0),$
so we have
\be\label{ea.42}
|u_{\alpha n}(0, t)|\leq C,\,\,\mbox{for $t\in[0, t_0).$}
\ee
Since $\partial\Omega$ is convex, $f$ satisfies \eqref{in.8}, moreover by Lemma \ref{ealm.2}, $f$ is uniformly bounded.
We can get an uniform bound on $|u_{nn}(0, t)|, t\in[0, t_0)$ by a standard contradiction argument (see \cite{CNS5}).
Thus, We complete the proof of this lemma.
\end{proof}

Combining the results in Lemma \ref{ealm.1}, \ref{ealm.3}, and \ref{ealm.4}, we proved the long time existence of
equation \eqref{ea.2}, thus Theorem \ref{eath.1}.

\bigskip
\section{Existence of the entire flow}
\label{ex}
\setcounter{equation}{0}
First of all, we will use the H{\"o}lder estimates to prove the maximality of our limit solution.
Since $f$ satisfies \eqref{in.6} and \eqref{in.7}, following \cite{SS} Section 6 we can prove
\begin{lemma}
\label{exlm.1}
Let $u:\R^{n+1}\times[0, \infty)\goto\R$ be a graphical solution to the general curvature flow and
$M\geq 1$ such that
\[|Du(x, t)|\leq M\]
for all $(x, t)$ where $u(x, t)\leq 0.$
For any $x_0\in\R^{n+1}$ and $t_1, t_2\geq 0,$ if $u(x_0, t_1)\leq -1,$ then when
$|t_1-t_2|\leq\frac{1}{8M^2}$ we have
\be\label{ex.1}
\frac{|u(x_0, t_1)-u(x_0, t_2)|}{\sqrt{|t_1-t_2|}}\leq\sqrt{2}(M+1).
\ee
\end{lemma}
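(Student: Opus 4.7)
Admissibility gives $\dot u = \sqrt{1+|Du|^2}\,F > 0$, so $t \mapsto u(x_0, t)$ is non-decreasing. Without loss of generality we assume $t_1 \le t_2$; the other case reduces to this one by swapping $t_1$ and $t_2$, since monotonicity forces $u(x_0, t_2) \le u(x_0, t_1) \le -1$, which is still in the regime where the gradient bound applies. Write $\tau = t_2 - t_1 \le \frac{1}{8M^2}$ and $h_0 = u(x_0, t_1) \le -1$; it suffices to prove $u(x_0, t_2) - h_0 \le \sqrt{2}(M+1)\sqrt{\tau}$. Since $u(\cdot, t_1)$ is $M$-Lipschitz wherever it is non-positive, a continuation argument using $|h_0| \ge 1$ yields $u(\cdot, t_1) \le 0$ on the whole ball $B_{1/M}(x_0)$, and hence
\[
u(x, t_1) \le h_0 + M|x - x_0| \qquad \text{for every } x \in B_{1/M}(x_0).
\]

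The plan is to trap the graph from above by a shrinking sphere. Choose
\[
R = (M+1)\sqrt{\frac{2\tau}{M(M+2)}}, \qquad a = MR,
\]
and let $S(t_1) \subset \R^{n+2}$ be the sphere of radius $R$ centered at $(x_0, h_0 + (M+1)R)$. The hypothesis $\tau \le \frac{1}{8M^2}$ forces $R \le 1/M$, so the vertical projection of $S(t_1)$ lies in the ball where the Lipschitz bound on $u(\cdot, t_1)$ is valid. Comparing the lower hemisphere $z_{-}(x) = h_0 + (M+1)R - \sqrt{R^2 - |x-x_0|^2}$ with the envelope $h_0 + M|x - x_0|$ on $B_R(x_0)$ reduces, after squaring, to $(M+1)^2 \ge M^2 + 1$, which is trivial. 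Hence $S(t_1)$ sits strictly above $\Sigma_{t_1}$ throughout its projection.

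Under the flow \eqref{in.1}, $S(t)$ stays concentric with shrinking radius $r(s) = \sqrt{R^2 - 2s}$, because \eqref{in.6} and \eqref{in.7} give $f(1/r, \dots, 1/r) = 1/r$, so the radial ODE $\dot r = -1/r$ is independent of the particular $f$. One checks $R^2/2 > \tau$, so the sphere survives the entire interval $[t_1, t_2]$. The avoidance principle for general curvature flow---which follows from applying the parabolic maximum principle to the difference of the two graphical representations on the shrinking projected disk, using the ellipticity \eqref{in.2}---then gives $S(t) \cap \Sigma_t = \emptyset$ with $S(t)$ above $\Sigma_t$ for every $t \in [t_1, t_2]$. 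The lowest point of $S(t_2)$ on the vertical line above $x_0$ has height
\[
h_0 + (M+1)R - r(\tau) = h_0 + R\cdot\frac{M(M+2)}{M+1} = h_0 + \sqrt{2\tau\, M(M+2)} \le h_0 + \sqrt{2}(M+1)\sqrt{\tau},
\]
using $M(M+2) \le (M+1)^2$. Since $\Sigma_{t_2}$ lies below $S(t_2)$, this is an upper bound for $u(x_0, t_2)$ and \eqref{ex.1} follows.

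The substantive difficulty is choosing the barrier. A sphere tangent to $\Sigma_{t_1}$ at $(x_0, h_0)$ cannot lie above the graph, because the Lipschitz cone of slope $M$ at that point cuts through every such tangent sphere; the precise lift $a = MR$ is the minimum that clears the cone, and the particular radius $R$ above is what balances the lift against the shrinkage $R - r(\tau)$ to yield the sharp constant $\sqrt{2}(M+1)$. Given this geometric set-up, the remaining ingredients---concentric shrinking of spheres under \eqref{in.1} and the avoidance principle---are standard and only use the structural hypotheses \eqref{in.2}, \eqref{in.6}, and \eqref{in.7}.
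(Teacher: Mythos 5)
Your proof is correct and takes essentially the same route as the paper, which simply refers to Section~6 of S\'aez--Schn\"urer for this lemma: a concentric shrinking round sphere placed as a barrier above the graph, combined with the avoidance principle. The only adjustment needed in the general curvature setting is exactly the one you make, namely that \eqref{in.6} and \eqref{in.7} force spheres to shrink with $\dot r = -1/r$ independently of the particular $f$, which is what produces the factor $\sqrt{2}(M+1)$ here in place of the $\sqrt{2n}(M+1)$ of the mean curvature case.
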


Next, we are going to prove the existence result. The proof mostly follows \cite{SS} Section 8, but for completeness,
we will include it here.
\begin{theorem}
\label{exth.1}
Let $A\in\R^{n+1}$ be an open set. Assume that $u_0:A\goto\R$ is maximal, locally Lipschitz continuous,
and weakly admissible. Then there exists $\Omega\subset\R^{n+1}\times[0, \infty)$ such that
$\Omega\cap(\R^{n+1}\times\{0\})=A\times\{0\}$ and a classical singularity resolving solution
$u:\Omega\goto \R$ with initial value $u_0.$
\end{theorem}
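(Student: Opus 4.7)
The plan is to obtain $u$ as an iterated limit, as $R \to \infty$, then $L \to \infty$, then $i \to \infty$, of the smooth Dirichlet solutions $u_{i,R}^L$ produced by Theorem \ref{eath.1}, and then to set
\[
\Omega := \{(x,t) \in \R^{n+1} \times [0,\infty) : u(x,t) < \infty\}.
\]
This triple ordering removes the three compactifications in turn: the finite radius $R$ of the underlying ball, the Dirichlet height $L$, and the smoothing index $i$ for the initial data.

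For fixed $i$ and $L$, on each sublevel set $\{u_{i,R}^L < M\}$ with $M < L$ the estimates of Section \ref{ie} apply uniformly in $R$: Theorem \ref{ieth.1} controls $|Du|$, Theorem \ref{ieth.2} bounds $F$ from below, and Theorem \ref{ieth.3} bounds $|D^2u|$ in terms of $M-u$, all with constants depending only on $u_{0,i}$ and $M$. Since $F$ is concave by \eqref{in.3}, Krylov--Safonov and Evans--Krylov regularity upgrade these to $C^{k,\alpha}$ interior bounds, and a diagonal Arzel\`a--Ascoli argument extracts a smooth limit $u_i^L$ on each $\{u < M\}$. The very same estimates drive the subsequent passages $L \to \infty$ and $i \to \infty$: in both cases the bounds on $\{u < M\}$ depend only on $M$ and the local Lipschitz norm of the initial datum, which are uniform in the approximation parameters. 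The time-H\"older estimate of Lemma \ref{exlm.1} together with the uniform convergence $u_{0,i} \to u_0$ on compact subsets of $A$ then gives continuity up to $t = 0$ with $u(\cdot,0) = u_0$, so $\Omega_0 = A$.

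It remains to verify the maximality condition, which is the main obstacle. The lower bound $u \geq \inf_A u_0$ propagates through the approximation via parabolic comparison applied at each level. For properness, suppose a sequence $(x_k, t_k) \in \Omega$ accumulates at $(x_\infty, t_\infty) \in \partial\Omega$ while $u(x_k, t_k) \leq M$; then by the interior $C^{k,\alpha}$ estimates on $\{u < M+1\}$ together with the time-H\"older continuity, the limit would be smooth in a full space-time neighborhood of $(x_\infty, t_\infty)$ with $u(x_\infty, t_\infty) \leq M+1$, forcing $(x_\infty, t_\infty)$ into the interior of $\Omega$, a contradiction. The remaining technical burden is bookkeeping: one must extract the subsequences compatibly across the three limits by fixing $M$ first and taking the approximation parameters large relative to $M$ before invoking compactness, so that the exhausting sequence of sublevel sets carries matching bounds at every stage.
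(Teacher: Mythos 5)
Your overall plan—extract iterated limits of the Dirichlet approximations $u_{i,R}^L$, using the interior estimates of Section~\ref{ie} plus Krylov--Safonov/Evans--Krylov for compactness, and the time-H\"older estimate of Lemma~\ref{exlm.1} for continuity up to $t=0$—matches the paper in spirit, with some bookkeeping differences (the paper takes the diagonal $u^L_{i,i}\to u^L$, then $L\to\infty$, rather than your three nested limits). However, your treatment of the maximality condition has a genuine gap, and this is precisely where the paper does real work.

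The maximality condition requires both $u\geq c$ and that $u$ restricted to $\Omega\cap(\R^{n+1}\times[0,T])$ is \emph{proper} for each $T$. Properness has two parts: sequences with $u$ bounded cannot escape to the relative boundary $\partial\Omega$ at finite distance, and they cannot escape to spatial infinity either (i.e.\ $u(x,t)\to\infty$ as $|x|\to\infty$ uniformly for $t\leq T$). Your contradiction argument only addresses the first part—and even there it is essentially a restatement that the sublevel sets $\{u<M\}$ are open, which is automatic once the interior estimates are in place. You say nothing about the $|x|\to\infty$ behavior, which is the part that can actually fail without an argument: a priori, the approximants $u^L$ could sag far below $L$ at large $|x|$. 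The paper closes this via a barrier comparison with shrinking spheres: since $u_0$ is maximal, for every $r>0$ one finds $d(r)$ so that a ball of radius $r$ centered at height $L-r-1$ lies below $\min\{u_0,L\}$ for $|x|\geq d$; comparing with the shrinking-sphere solution (which exists here because $f$ is normalized and homogeneous of degree one, so spheres evolve self-similarly with finite extinction time) then forces $u^L(x,t)\geq L-2$ for $|x|\geq d$ and $0\leq t\leq r-1/2$. This uniform-in-$L$ height lower bound away from a compact set is what survives the limit $L\to\infty$ and yields properness. Without it, defining $\Omega:=\{u<\infty\}$ does not by itself show that $\Omega_t$ stays bounded or that $u$ satisfies the maximality condition.

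A secondary point: you invoke Arzel\`a--Ascoli and regularity theory directly, whereas the paper delegates the extraction of the limit domain $\Omega$ and the verification that the limit is a classical singularity resolving solution to Lemmas~7.1 and~7.3 of \cite{SS}. Your more hands-on route is fine in principle, but those lemmas also encapsulate the statement that $\Omega$ is relatively open and that the limit $u$ inherits $u(x,t)\to\infty$ as $(x,t)\to\partial\Omega$; if you do not cite them you must supply this, and again the sphere barrier is the missing input.
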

\begin{proof}
Consider the approximate solutions $u^L_{i, R}$ to equation \eqref{ea.1}, by standard argument we get
$u^{L}_{i, i}\goto u^L$ as $i\goto\infty$ and $u^L$ is a solution to the general curvature flow with
initial condition $\min{u_0, L}.$

Let's derive the lower bound for $u^L$ that will ensure maximality of the limit when $L\goto\infty.$
As the initial value $u_0$ fulfills the maximality condition, for every $r>0$ we can find
$d=d(r)$ such that $B_r((x, L-r-1))$ lies below $\min\{u_0, L\}$ for $|x|\geq d.$ By the maximum principle we have
$u^L(x, t)\geq L-2$ for $0\leq t\leq r-1/2$ if $|x|\geq d.$ Therefore, for any $T>0$ there exists $d\geq 0$
such that $u^L(x, t)\geq L-2$ for $|x|>d$ and $0\leq t\leq T.$

By interior estimates we obtained in Section \ref{ie} and standard PDE theorem, we get locally uniform estimates on
arbitrary derivatives of $u^L$ in compact subsets of $\Omega\cap(\R^{n+1}\times(0, \infty)).$ The H{\"o}lder estimates in Lemma \ref{exlm.1}
also survives the limiting process and we obtain uniform bounds for $|u^L|_{C^{0, 1; 0, 1/2}}$ in compact subset of $\Omega.$

Now, we apply Lemma 7.3 in \cite{SS} to $u^L, L\in\mathbb{N},$ and obtain a solution $(\Omega, u)$ and a subsequence of $u^L,$ which we assume
to be $u^L$ itself, such that $u^L\goto u$ locally uniformly in $\Omega.$ By Lemma 7.1 in \cite{SS} and those interior derivative estimates we obtained
in Section \ref{ie}, we conclude that $u$ is a classical singularity resolving solution with initial value $u_0.$
\end{proof}

The relation between level set solutions (see \cite{LX2} ) and the singularity resolving solution can be derived exactly as Section 9 in \cite{SS}.
Therefore, we will only state the theorem here.
\begin{theorem}
\label{exth.2}
Let $(\Omega, u)$ be a solution to general curvature flow as in Theorem \ref{exth.1}.
Let $\partial\mathcal{D}_t$ be the level set solution of $\partial\Omega_0.$ If $\partial\mathcal{D}_t$
does not fatten, then the measure theoretic boundaries of $\Omega_t$ and $\mathcal{D}_t$ coincide for
$t>0: \partial^\mu\Omega_t=\partial^\mu\mathcal{D}_t.$
\end{theorem}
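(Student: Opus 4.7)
The plan is to adapt Section~9 of S\'aez--Schn\"urer \cite{SS} to the general curvature setting. Let $\phi:\R^{n+1}\times[0,\infty)\to\R$ be a level set function with $\{\phi(\cdot,0)\leq 0\}=\overline{\Omega_0}$ and $\{\phi(\cdot,0)=0\}=\partial\Omega_0$, evolving by the degenerate parabolic level set equation corresponding to speed $f(\kappa)$; its viscosity solution theory (cf.\ \cite{LX2}) supplies the comparison principle I will rely on. Write $\mathcal{D}_t=\{\phi(\cdot,t)\leq 0\}$. The goal is to show that $\Omega_t$ and $\mathcal{D}_t$ agree up to a set of measure zero, since the non-fattening hypothesis $|\partial\mathcal{D}_t|=0$ then forces $\partial^\mu\Omega_t=\partial^\mu\mathcal{D}_t$.

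For the inclusion $\Omega_t\subseteq\mathcal{D}_t$, fix $(x_0,t_0)$ with $x_0\in\Omega_{t_0}$; then $u(x_0,s)<\infty$ for $s\in[0,t_0]$. For large $L$ the truncated sublevel set $\{x:u(x,s)\leq L\}$ has smooth boundary evolving by graphical general curvature flow in view of Lemma~\ref{prelm.1}(i), so its characteristic function provides a discontinuous subsolution to the level set PDE; the comparison principle, combined with $L\to\infty$, yields $\phi(x_0,t_0)\leq 0$. Conversely, to show the interior of $\mathcal{D}_t$ lies in $\Omega_t$, I would enclose any interior point by a small smooth admissible compact hypersurface $\Gamma_0$ strictly inside $\Omega_0$. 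Theorem~\ref{eath.2} evolves $\Gamma_0$ smoothly for a short time, and the avoidance principle keeps it strictly inside the level set flow of $\partial\Omega_0$; meanwhile the approximating graphs $u^L$ from Theorem~\ref{eath.1} stay above $\Gamma_t$, so after passing $L\to\infty$ one gets $\Gamma_t\subseteq\Omega_t$. Exhausting a neighborhood of the chosen point by such $\Gamma_0$ gives the desired inclusion.

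The main obstacle is the first inclusion: one must justify the comparison between the singularity resolving graph, which blows up at $\partial\Omega_t$, and the globally defined viscosity level set solution. This requires carefully using the sublevel sets $\{u\leq L\}$ as barriers in the sense of Ilmanen and Evans--Spruck adapted to the $f$-curvature equation, and verifying that the discontinuous comparison principle for the general speed $f$ behaves as in the mean curvature case treated in \cite{SS}; the structure conditions \eqref{in.2}--\eqref{in.7} are exactly what is needed to make the associated level set operator degenerate elliptic and invariant under the usual viscosity manipulations. Once both inclusions are established modulo measure-zero sets, the non-fattening hypothesis $|\partial\mathcal{D}_t|=0$ is invoked only at the end to conclude that $\partial^\mu\Omega_t$ and $\partial^\mu\mathcal{D}_t$ actually coincide rather than differing by a positive-measure set.
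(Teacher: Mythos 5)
The paper gives no proof of Theorem~\ref{exth.2}; it simply asserts that the argument ``can be derived exactly as Section~9 in \cite{SS},'' so you are really being asked to reconstruct that reduction for general $f$. Your overall outline---compare with the viscosity level-set solution of \cite{LX2}, prove $\Omega_t\subseteq\mathcal{D}_t$ by an outer barrier argument, prove $\operatorname{int}\mathcal{D}_t\subseteq\Omega_t$ by evolving small compact admissible hypersurfaces from inside, and invoke non-fattening only at the very end---is the right strategy and matches the structure of \cite{SS}, Section~9.

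However, the mechanism you propose for the first inclusion has a genuine gap. You claim the sublevel set $\{x:u(x,t)\leq L\}\subset\R^{n+1}$ has boundary ``evolving by graphical general curvature flow in view of Lemma~\ref{prelm.1}(i).'' That lemma only says the height function $u=\langle X,e^{n+1}\rangle$ satisfies $\bigl(\tfrac{d}{dt}-F^{ij}\nabla_{ij}\bigr)u=0$ along the moving graph; it says nothing about how the $L$-level set of $u(\cdot,t)$ moves as a hypersurface of $\R^{n+1}$. At a point of $\{u(\cdot,t)=L\}$ the normal speed is $-\dot u/|Du|=-wF/|Du|$, and the second fundamental form of that level set in $\R^{n+1}$ is not the second fundamental form of the graph in $\R^{n+2}$, so this speed is not $f$ applied to the level set's curvatures. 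Hence $\chi_{\{u\leq L\}}$ is not a subsolution of the level-set PDE in $\R^{n+1}$. In \cite{SS} the comparison is made one dimension up: the compact Dirichlet graphs $\operatorname{graph}(u^L_{i,R})$ serve as set-theoretic (Ilmanen) barriers in $\R^{n+2}$ against the cylinder $\partial\mathcal{D}_t\times\R$, and $\Omega_t\subseteq\mathcal{D}_t$ is then read off. Moving to cylinders introduces a second subtlety that is invisible for mean curvature: the base hypersurface $\partial\mathcal{D}_t$ flows in $\R^{n+1}$ with speed $f(\kappa_1,\dots,\kappa_n)$, while $f$-curvature flow of the cylinder $\partial\mathcal{D}_t\times\R$ in $\R^{n+2}$ has speed $f(\kappa_1,\dots,\kappa_n,0)$; for $f=H$ these coincide, but for general $f$ one must verify (as the level-set theory in \cite{LX2} is evidently set up to guarantee) that they agree up to a harmless time reparametrization. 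Finally, your appeal to Theorem~\ref{eath.2} for short-time existence of a compact admissible $\Gamma_0$ is misplaced---that is a Dirichlet boundary-value result, not a closed-hypersurface result---though this is easily repaired with any standard short-time existence theorem for closed admissible hypersurfaces.
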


\bigskip


\begin{thebibliography}{99}
\bibitem{CD}
Kyeongsu Choi; Panagiota Daskalopoulos
{\em The $Q_k$ flow on complete non-compact graphs}.
preprint.

\bibitem{CNS3}
Caffarelli, L.; Nirenberg, L.; Spruck, J.
{\em The Dirichlet problem for nonlinear second-order elliptic equations. III. Functions of the eigenvalues of the Hessian}.
Acta Math. 155 (1985), no. 3-4, 261–-301.

\bibitem{CNS5}
 Caffarelli, Luis; Nirenberg, Louis; Spruck, Joel.
{\em Nonlinear second-order elliptic equations. V. The Dirichlet problem for Weingarten hypersurfaces}.
Comm. Pure Appl. Math. 41 (1988), no. 1, 47–-70.

\bibitem{GT}
Gilbarg, David; Trudinger, Neil S.
{\em Elliptic partial differential equations of second order}.
Second edition.

\bibitem{SS}
S{\'a}ez, Mariel; Schn{\"u}rer, Oliver C.
{\em Mean curvature flow without singularities}.
J. Differential Geom. 97 (2014), no. 3, 545-–570.


\bibitem{SUW}
 Sheng, Weimin; Urbas, John; Wang, Xu-Jia
{\em Interior curvature bounds for a class of curvature equations}.
Duke Math. J. 123 (2004), no. 2, 235–-264.


\bibitem{LX1}
Xiao, Ling
{\em Curvature flow of complete convex hypersurfaces in hyperbolic space}.
J. Geom. Anal. 23 (2013), no. 4, 1641–-1673.

\bibitem{LX2}
Xiao, Ling
{\em Motion of level set by general curvature}.
preprint.
\end{thebibliography}
\end{document}